\theoremstyle{definition}
\newtheorem{Definition}{Definition}[section]
\theoremstyle{plain}
\newtheorem{Theorem}{Theorem}[section]
\newtheorem{Lemma}[Theorem]{Lemma}
\newtheorem{Remark}[Theorem]{Remark}
\newtheorem{Corollary}[Theorem]{Corollary}
\newtheorem{Example}{Example}[section]
\newcommand{\bH}{\mathbb{H}}
\newcommand{\bI}{\mathbb{I}}
\newcommand{\bN}{\mathbb{N}}
\newcommand{\bR}{\mathbb{R}}
\newcommand{\bV}{\mathbb{V}}
\newcommand{\bZ}{\mathbb{Z}}
\newcommand{\cH}{\mathcal{H}}
\newcommand{\cP}{\mathcal{P}}
\newcommand{\I}{\mathbb{I}}
\newcommand{\Id}{\operatorname{Id}}
\newcommand{\diag}{\operatorname{diag}}
\newcommand{\SO}{\operatorname{SO}}
\renewcommand{\H}{\mathbb{H}}
\renewcommand{\span}{\operatorname{span}}
\newcommand{\BIF}{\mathcal{BIF}}
\newcommand{\LB}{\Delta_{S^{n-1}}}
\numberwithin{equation}{section}
\begin{document}

\title[Elliptic systems on geodesic balls]{Rabinowitz alternative for non-cooperative \\ elliptic systems on geodesic balls}

\author{S{\l}awomir Rybicki}
\address{Faculty of Mathematics and Computer Science\\
Nicolaus Copernicus University \\
PL-87-100 Toru\'{n} \\ ul. Chopina $12 \slash 18$ \\
Poland}

\email{rybicki@mat.umk.pl}

\author{Naoki Shioji}
\address{Department of Mathematics \\ Faculty of Engineering\\ Yokohama National University \\ Tokiwadai, Hodogaya-ku \\ Yokohama 240-8501, Japan}

\email{shioji@ynu.ac.jp}

\author{Piotr Stefaniak}
\address{School of Mathematics\\
West Pomeranian University of Technology, PL-70-310 Szczecin \\ al. Piast\'{o}w $48\slash 49$ \\ Poland}

\email{pstefaniak@zut.edu.pl}
\date{\today}

\keywords{Symmetric Rabinowitz alternative, global symmetry-breaking bifurcations,  non-cooperative elliptic systems, equivariant degree}
\subjclass[2010]{Primary:  	35B32; Secondary:  	35J20.}
\thanks{Partially supported by the National Science Center,  Poland,  under grant    DEC-2012/05/B/ST1/02165}

\begin{abstract}
The purpose  of this paper is to study properties of continua (closed connected sets) of  nontrivial solutions of  non-cooperative elliptic systems considered on  geodesic balls in $S^n$. In particular, we have shown that if the geodesic ball is a hemisphere, then these continua are unbounded. It is also shown that the  phenomenon of global symmetry-breaking bifurcation of such solutions occurs. Since the problem is variational and $\SO(n)$-symmetric, we apply the techniques of equivariant bifurcation theory  to prove the main results of this article. As the  topological tool we  use the degree theory for $\SO(n)$-invariant strongly indefinite functionals defined in  \cite{degree}.
\end{abstract}

\maketitle

\section{Introduction}

The aim of this paper is to study continua  of solutions of boundary value  problems for non-cooperative elliptic systems considered on  geodesic balls in $S^n$, i.e. systems of the form
\begin{equation} \label{Dirichletwstep}
\left\{
\begin{array}{rclll}
\Lambda \Delta_{S^{n}} u & = &\nabla_u F(u,\lambda) & \text{ in } & B(\gamma), \\ u & = & 0  & \text{ on } & \partial B(\gamma),
\end{array}
\right.
\end{equation}
where $\Lambda= \diag(\alpha_1,\ldots,\alpha_p), \alpha_i=\pm 1,$
$\Delta_{S^n}$ is the Laplace-Beltrami operator on the sphere $S^n, B(\gamma)\subset S^n$ is a geodesic ball of radius $\gamma > 0,$
 $F\in C^2(\bR^p\times\bR,\bR)$ is such that $\nabla_u F(u,\lambda)=\lambda u+\nabla_u \eta (u,\lambda)$, where $\eta \in C^2(\bR^p\times\bR,\bR)$, $\nabla_u \eta (0,\lambda)=0$ and $\nabla^2_u \eta(0,\lambda)=0$ for every $\lambda\in\bR$.

More precisely speaking, our purpose is to study the phenomenon of global bifurcations of weak solutions of system \eqref{Dirichletwstep}. In other words we have studied closed connected sets of weak solutions of this system, satisfying a Symmetric Rabinowitz alternative. For the classical Rabinowitz alternative see for instance \cite{RB,JI,Nierenberg,Rabinowitz,Rabinowitz1}.

Global bifurcations of solutions of nonlinear problems have been studied  under various conditions by many authors. Some  references and discussion of these results can be found in \cite{LB}. Here we discuss only some results concerning symmetric nonlinear problems, where the authors have eliminated one of the Rabinowitz alternatives showing that some (or all)  global solution branches  are bounded (or unbounded).

An elliptic boundary value problem on a two-dimensional annulus has been considered  in \cite{Dancer1}.  The author has studied the bifurcation of non-radially symmetric  solutions from radially symmetric positive ones.  The existence of many distinct global branches of non-symmetric solutions which do not intersect has been shown in this article.

 The authors of  \cite{HeaKie0} have studied global symmetry-breaking equilibria of the van der Waals-Cahn-Hilliard phase-field model on the sphere $S^2.$ What is interesting they have proved that all the continua of nontrivial solutions are bounded! Thanks to the classical Rabinowitz alternative (because the unboundedness of continua has been eliminated) these continua meet the set of trivial solutions in at least two points. A general class of quasi-linear elliptic systems has been considered in \cite{HeaKie1}. It has been proved that, under  additional assumptions on nodal sets of the eigenvalues of the linearized problem, some of  the continua of nontrivial solutions are separated and that is why unbounded.

The Neumann problem on a two-dimensional ball has been considered in \cite{Miyamoto0}. It has been proved that there are unbounded continua consisting of non-radially symmetric solutions emanating from the second and third eigenvalues of the Laplace operator. The Neumann problem on a ball of any dimensions  has been studied in \cite{Miyamoto}. The author has proved the existence of an unbounded continuum of non-radially symmetric solutions of this problem bifurcating from the second eigenvalue of the Laplace operator. Under additional assumptions this continuum is unbounded in $\lambda$-direction.

A nonlinear eigenvalue problem on the sphere $S^{n-1}$ has been considered in \cite{RybickiLB}. It has been proved that any continuum of nontrivial solutions bifurcating from the trivial ones is unbounded. Similar results for the non-cooperative systems of elliptic differential equations have been obtained in  \cite{LB}.

In this article we consider weak solutions of problem \eqref{Dirichletwstep}
as orbits of critical points of an $\SO(n)$-invariant functional defined on a suitably chosen infinite-dimensional orthogonal representation of  $\SO(n).$ This justifies an application of a  special degree, i.e. the degree  for equivariant gradient maps, see \cite{degree,Rybicki}. It is worth to point out that this degree is an element of the Euler ring $U(\SO(n))$ of $\SO(n),$ see \cite{Dieck,Dieck1} for the definition of this ring. The advantage of using this degree lies in the fact that it allows us to distinguish homotopy classes of equivariant gradient maps. We emphasize that for the class of equivariant gradient maps our degree is stronger than the classical Leray-Schauder degree.

We have proved that if the geodesic ball is a hemisphere, then any continuum of weak solutions of system \eqref{Dirichletwstep}, which bifurcates from the set of trivial ones, is unbounded, see Theorem \ref{pmpn}, \ref{pmpn1}. In other words one of the Rabinowitz alternatives is eliminated, by showing that all global solution branches are unbounded.

How did we prove it? Applying the degree for strongly indefinite $\SO(n)$-invariant functional, we associate a bifurcation index, defined by formula \eqref{bifind},  to each point at which the necessary condition for a bifurcation is satisfied. Next we show that for any choice of a finite number of these indexes, their sum is nontrivial in the Euler ring $U(\SO(n))$. Hence the Symmetric Rabinowitz alternative, see Theorem \ref{alternative},  implies unboundedness of the bifurcating continua. We have received it through a careful analysis of the eigenspaces of the Laplace-Beltrami operator $\Delta_{S^n}$ as representations of  $\SO(n),$ see Remark \ref{stru}, Theorem \ref{PrzWlasneNaPolkoliD} and Corollary \ref{55}. It would be desirable to prove unboundedness of continua for a geodesic ball of any radius but we have not been able to do this.

For  geodesic ball of an arbitrary radius we have characterized bifurcation points at which  the global symmetry-breaking phenomenon occurs, see Theorem \ref{sbm} and Corollary \ref{SymBreakD}. Finally, a necessary condition for the existence of bounded continua is presented in Theorem \ref{bound}.

After this introduction our article is organized as follows.

In Section \ref{prelim}  we recall basic properties of non-cooperative elliptic systems considered on geodesic balls and eigenvalues of the Laplace-Beltrami operator on these balls. The main problem is given by formula \eqref{Dirichlet}. The associated functional is defined by formula \eqref{PhiD}.  Its properties are described in Lemma  \ref{operatorLBzD}. We introduce a notion of a local bifurcation of solutions of nonlinear problems in Definition \ref{defbif}. The set of parameters at which the bifurcation of solutions of problem \eqref{Dirichlet} can occur is described in Lemma \ref{313D} and Theorem \ref{P(Fi)}. Finally, properties of the eigenvalues and eigenspaces of the Laplace-Beltrami operator considered on geodesic balls in $S^n$ (with Dirichlet boundary conditions) are described in Remark \ref{stru}, Theorem \ref{PrzWlasneNaPolkoliD} and Corollary \ref{55}.

In Section \ref{main}   the main results of this article are stated and proved. The unboundedness of continua  of solutions of system \eqref{Dirichletwstep} on a hemisphere i.e. for $\gamma=\pi \slash 2$ are proved in Theorems \ref{pmpn}, \ref{pmpn1}. A  characterization of bifurcation points of this system at which the global symmetry-breaking of solutions phenomenon occurs on  geodesic ball of an arbitrary radius is given by  Theorem \ref{sbm} and Corollary \ref{SymBreakD}. A necessary condition for the existence of bounded continua of solutions of problem \eqref{Dirichletwstep} are proved in Theorem \ref{bound}.

In Section \ref{appendix}, for the convenience of the reader, we have repeated the relevant material on equivariant bifurcation theory thus making our exposition self-contained. For the definition of the Euler ring $U(G)$ of a compact Lie group $G$ we refer the reader to \cite{Dieck,Dieck1}. Since most of the computations in this article will be done in the Euler ring $U(\SO(2)),$ we have reminded the definition of this ring, see Definition \ref{uso2}. Next we have reminded classification of orthogonal representations of  $\SO(2),$ see  \eqref{reprso2}. A definition of the bifurcation index which is an element of the Euler ring $U(\SO(n))$ is given by formula \eqref{bifind}. Remark  \ref{iso2}
allows us to reduce difficult computations in the Euler ring $U(\SO(n))$ to much simpler computations in the Euler ring of $\SO(2).$ The essential role in the proofs of the main results of this article plays the Symmetric Rabinowitz alternative, see Theorem \ref{alternative}. In Lemmas \ref{pm}, \ref{pp} we prove formulas for bifurcation indexes.

\section{Preliminaries}
\label{prelim}

In this section we remind some definitions of equivariant topology. Moreover, we  present  a variational setting of our problem and study properties of the associated functional.

Throughout this section $G$ stands for a real compact Lie group.
Let $X$ be a topological space. An action of $G$ on $X$ is a continuous map $\rho : G \times X \to X$ such that
\begin{itemize}
\item $\rho(g,\rho(h,x))=\rho(gh,x)$ for $g,h \in G, x \in X,$
\item  $\rho(e,x)=x $ for $x \in X, e \in G$ the unit.
\end{itemize}

A $G$-space is a pair $(X,\rho)$ consisting of a space together with an action  of $G$ on $X$. Usually the $G$-space $(X,\rho)$ is denoted just by underlying topological space $X$ and $\rho(g, x)$ is denoted by $gx.$  An action of $G$ on $X$ is called trivial if $gx=x$ for all  $x \in X, g \in G.$ 
For each $x \in X$ the set $G(x)= \{gx : g \in G$ is called the orbit through
$x$ and $G_x = \{ g \in G : gx = x\}$ is called the isotropy group of
$x.$
A subset $A$ of a $G$-space $X$ is said to be  $G$-invariant if for all $x \in A$ and $g \in G$ we have $gx \in A$ i.e. $G(x) \subset A$ for any $x \in A.$

Suppose that $X$ and $Y$ are $G$-spaces. A continuous map $f : X \to Y$ is called $G$-equivariant if  for all $g \in G$ and
$x \in X$ the equality $f(gx) =  gf(x)$ holds true. Moreover, if $Y = R$ with  trivial action of $G$ then a map $f : X \to R$ is said to be
$G$-invariant.

\begin{Definition}
Let $\bV$, $\bV'$ be representations of a compact Lie group $G.$ We say that $\bV$ and $\bV'$ are equivalent if there is an equivariant linear isomorphism $L \colon  \bV \to \bV'.$ For the sake of simplicity we denote this relation briefly $\bV \approx_G \bV'.$
\end{Definition}

Throughout this article $\SO(n)$ stands for a real special orthogonal group.

Consider the sphere $S^{n} = \{x\in\bR^{n+1}\colon \|x\|=1\} $ and the metric between two points $p,q\in S^n$ defined by $d(p,q)=\inf\limits_{\omega}\int\limits_{a}^{b}|\omega'(t)|dt$, where $\omega$ ranges over all continuous, piecewise $C^1$ paths $\omega\colon[a,b]\to S^n$ for which $\omega(a)=p$, $\omega(b)=q$. Define the geodesic ball in $S^n$ centered at $N=(0,\ldots,0,1)$ and radius $\gamma\in (0,\pi)$ by
$B(\gamma)=\{q\in S^n \colon d(N,q)<\gamma\}$.
The geodesic ball $B(\gamma)$ is an $\SO(n)$-invariant subset of the representation $\bR^{n+1}$ of the  group $\SO(n)$ with the action $\SO(n)\times \bR^{n+1}\to \bR^{n+1}$ given by $(g,x)=(g,(x_1,\ldots,x_n,x_{n+1}))\mapsto (g(x_1,x_2,\ldots,x_n),x_{n+1})$.

Consider the following system of equations
\begin{equation}\label{Dirichlet}
\left\{
\begin{array}{rclll}
\Lambda \Delta_{S^{n}} u & = &\nabla_u F(u,\lambda) & \text{ in } & B(\gamma), \\ u & = & 0  & \text{ on } & \partial B(\gamma),
\end{array}
\right.
\end{equation}
where
\begin{enumerate}
\item[(A1)] $F\in C^2(\bR^p\times\bR,\bR)$, $\nabla_{u} F(u,\lambda)=(\nabla_{u_1} F(u,\lambda),\ldots,\nabla_{u_p} F(u,\lambda))$,
\item[(A2)] $\nabla_u F(u,\lambda)=\lambda u+\nabla_u \eta(u,\lambda)$, where $\eta \in C^2(\bR^p \times \bR,\bR)$ and for every $\lambda \in \bR$ follows
$\nabla_u \eta(0,\lambda)=0$ and $\nabla^2_u \eta(0,\lambda)=0$,
\item[(A3)] there exist $C>0$ and $1\leq s < (n+2)(n-2)^{-1}$ such that $|\nabla^2_u F(u,\lambda)|\leq C(1+|u|^{s-1})$ (if $n=2$, we assume that $s\in[1,+\infty))$,
\item[(A4)] $\Lambda= \diag(\alpha_1,\ldots,\alpha_p), \alpha_i\in\{-1,1\}$.
\end{enumerate}

If the coefficients $\alpha_i$ are not of the same sign,  we call  system \eqref{Dirichlet} non-cooperative.
From now on  $p_-$ ($p_+$) stands for the number of negative (positive) $ \alpha_i,$   $i=1,\ldots, p$.

Let $H^1_0(B(\gamma))$ denote the Sobolev space with the inner product
\[
\forall_{u,v\in H^1_0(B(\gamma))}\ \langle u,v\rangle_{H^1_0(B(\gamma))}=\int\limits_{B(\gamma)}\langle \nabla u(x),\nabla v(x)\rangle d\sigma.
\]
The space $H^1_0(B(\gamma))$ is an orthogonal representation of $\SO(n)$ with the action given by $\SO(n) \times H^1_0(B(\gamma)) \ni (g,u) \to gu \in H^1_0(B(\gamma)),$ where $(gu)(x)=u(g^{-1}x).$
Let $\bH$ be the direct sum of $p$ copies of the representation $H^1_0(B(\gamma))$, i.e.   $\bH=\bigoplus\limits^p_{i=1}H^1_0(B(\gamma))$. We consider $\bH\times \bR$  as a representation of $\SO(n)$ with the action given by $\SO(n) \times (\bH\times \bR) \ni (g,(u,\lambda)) \to (gu,\lambda) \in \bH\times \bR.$

Define a family of  $\SO(n)$-invariant  functionals $\Phi\colon\bH\times\bR\to\bR$ of the class $C^2$ by
\begin{equation}\label{PhiD}
\Phi(u,\lambda)=\frac{1}{2}\int\limits_{B(\gamma)}\sum^p_{i=1}(- \alpha_i|\nabla u_i(x)|^2)d\sigma-\int\limits_{B(\gamma)}F(u(x),\lambda)d\sigma
\end{equation}
and note that
\begin{eqnarray*}
 \Phi(u,\lambda)&=&\sum^p_{i=1}\frac{1}{2}\int\limits_{B(\gamma)}(- \alpha_i|\nabla u_i(x)|^2)d \sigma
 -\frac{\lambda}{2}\int\limits_{B(\gamma)} |u(x)|^2 d\sigma-
 \int\limits_{B(\gamma)}\eta(u(x),\lambda)d\sigma \\
 &=&
-\frac{1}{2}\sum^p_{i=1} \alpha_i||u_i||_{H^1_0(B(\gamma))}^2
-\frac{\lambda}{2}\int\limits_{B(\gamma)}|u_i(x)|^2d\sigma
-\int\limits_{B(\gamma)}\eta(u(x),\lambda)d\sigma.
\end{eqnarray*}
Define $T\colon H^1_0(B(\gamma))\to H^1_0(B(\gamma))$ and $\eta_0\colon \bH\times\bR\to \bR$ by
\[
\forall_{v\in H^1_0(B(\gamma))}\  \langle Tu ,v\rangle_{H^1_0(B(\gamma))}=\int\limits_{B(\gamma)}u(x)v(x)d\sigma,\ \ %\]\[
\eta_0(u,\lambda) =\int\limits_{B(\gamma)}\eta(u(x),\lambda)d\sigma.
\]

Since the functional $\Phi(\cdot,\lambda)$ is $\SO(n)$-invariant, its gradient $\nabla_u\Phi(\cdot,\lambda)$ is $\SO(n)$-equivariant.
The following lemma is a direct consequence of the above computations.

\begin{Lemma}\label{operatorLBzD}
Under the above assumptions:
\begin{eqnarray*}
\nabla_u\Phi(u,\lambda)&=&Lu-\lambda K(u)-\nabla_u\eta_0(u,\lambda)\\
&=&(- \alpha_1u_1-\lambda Tu_1,\ldots,- \alpha_p u_p-\lambda Tu_p)-\nabla_u\eta_0(u,\lambda),
\end{eqnarray*}
where
\begin{enumerate}
\item $L\colon\H\to\H$ given by $L(u_1,\ldots,u_p)=(-\alpha_1u_1,\ldots,-\alpha_pu_p)$ is a self-adjoint, bounded $\SO(n)$-equivariant Fredholm operator,
\item $K=(T,\ldots,T)\colon\bH\to\bH$ is a self-adjoint, bounded, completely continuous  $\SO(n)$-equivariant operator,
\item $\nabla_u\eta_0(u,\lambda)\colon\bH\times\bR\to\bH$ is a completely continuous, $\SO(n)$-equivariant operator such that $\nabla_u\eta_0(0,\lambda)=0$, $\nabla^2_u\eta_0(0,\lambda)=0$ for every $\lambda\in\bR$,
\item $u=(u_1,\ldots,u_p)\in \bH$ is a weak solution of system \eqref{Dirichlet} if and only if $\nabla_u\Phi(u,\lambda)=0$, that is  $u$ is a critical point of $\Phi$.
\end{enumerate}
\end{Lemma}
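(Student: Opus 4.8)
The statement is, as the authors indicate, essentially bookkeeping on the differential of \eqref{PhiD}, so the plan is to differentiate term by term and to check the listed properties block by block. I would first handle the quadratic part: by the identity displayed just before the lemma, $\tfrac12\int_{B(\gamma)}(-\alpha_i|\nabla u_i(x)|^2)\,d\sigma=-\tfrac{\alpha_i}{2}\|u_i\|_{H^1_0(B(\gamma))}^2$, whose gradient in $H^1_0(B(\gamma))$ is $-\alpha_i u_i$; summing over $i$ produces $Lu=(-\alpha_1u_1,\dots,-\alpha_pu_p)$. Self-adjointness is immediate from the inner product on $\H$; boundedness follows from $|\alpha_i|=1$; and since each block equals $\pm\Id$, $L$ is an isometric involution, hence invertible and therefore Fredholm of index $0$. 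Equivariance holds because the $\SO(n)$-action on $\H$ is diagonal across the $p$ copies while $L$ acts blockwise by scalars, so the two commute.

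For the linear operator $T$: from $\langle Tu,v\rangle_{H^1_0(B(\gamma))}=\int_{B(\gamma)}u(x)v(x)\,d\sigma$, symmetry of the right-hand side in $(u,v)$ gives self-adjointness, the Poincar\'e--Sobolev inequality $\|u\|_{L^2(B(\gamma))}\le c\,\|u\|_{H^1_0(B(\gamma))}$ gives boundedness, and complete continuity is precisely the compactness of the embedding $H^1_0(B(\gamma))\hookrightarrow L^2(B(\gamma))$. Equivariance follows from $\SO(n)$-invariance of $B(\gamma)$ and of the measure $d\sigma$: one has $\int_{B(\gamma)}(gu)(x)(gv)(x)\,d\sigma=\int_{B(\gamma)}u(x)v(x)\,d\sigma$ and the action on $H^1_0(B(\gamma))$ is orthogonal, whence $T(gu)=gTu$. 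The operator $K=(T,\dots,T)$ then inherits all four properties componentwise.

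For the nonlinear part, differentiating under the integral sign gives $\langle\nabla_u\eta_0(u,\lambda),v\rangle_{H^1_0(B(\gamma))}=\int_{B(\gamma)}\langle\nabla_u\eta(u(x),\lambda),v(x)\rangle\,d\sigma$, so $\nabla_u\eta_0(\cdot,\lambda)$ is the composition of the superposition (Nemytskii) operator $u\mapsto\nabla_u\eta(u(\cdot),\lambda)$ with a solution operator of the type of $T$ acting componentwise. The relations $\nabla_u\eta_0(0,\lambda)=0$ and $\nabla_u^2\eta_0(0,\lambda)=0$ come straight from differentiating under the integral and inserting $\nabla_u\eta(0,\lambda)=0$ and $\nabla_u^2\eta(0,\lambda)=0$ from (A2), and equivariance is proved exactly as for $T$. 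Complete continuity is the only genuinely nonroutine point: here the subcritical growth condition (A3), with $s<(n+2)(n-2)^{-1}$, is used to choose an exponent $q<2^*$ for which $\H\hookrightarrow(L^q(B(\gamma)))^p$ is compact, the superposition operator is bounded and continuous into $(L^{q'}(B(\gamma)))^p$, and the solution operator carries this back continuously into $\H$; the compact factor then makes the composition completely continuous, locally uniformly in $\lambda$.

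Finally, weak solutions are identified with critical points by multiplying \eqref{Dirichlet} by a test function $v\in\H$, integrating over $B(\gamma)$, and integrating by parts blockwise (the boundary term vanishes by the Dirichlet condition): $u$ is a weak solution iff $-\sum_{i=1}^p\alpha_i\langle u_i,v_i\rangle_{H^1_0(B(\gamma))}-\int_{B(\gamma)}\langle\nabla_uF(u(x),\lambda),v(x)\rangle\,d\sigma=0$ for all $v\in\H$, which by the computations above is exactly $\langle\nabla_u\Phi(u,\lambda),v\rangle=0$ for all $v$, i.e. $\nabla_u\Phi(u,\lambda)=0$. Apart from the compactness argument for $\nabla_u\eta_0$, every assertion is a direct consequence of the identities recorded immediately before the lemma.
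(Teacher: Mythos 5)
Your verification is correct and is exactly the routine argument the paper has in mind: the authors give no proof, stating only that the lemma ``is a direct consequence of the above computations,'' and your block-by-block differentiation of \eqref{PhiD}, the compact-embedding argument for $T$ and for the Nemytskii operator under (A3), and the integration-by-parts identification of weak solutions with critical points supply precisely the omitted details. No gaps.
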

 Denote by $\sigma(-\Delta_{S^{n}};B(\gamma))=\{0 < \lambda_1 < \lambda_2 < \ldots\}$ the set of all eigenvalues of the problem
\begin{equation}\label{eigenDirichlet}
\left\{\begin{array}{rclcc}
-\Delta_{S^{n}} u(x)&=&\lambda u(x) &\text{ in }&B(\gamma), \\
u(x)&=&0 &\text{ on }& \partial B(\gamma),
\end{array}
\right.
\end{equation}
and by $V^{\gamma}_{-\Delta_{S^{n}}}(\lambda_{m_0})$  the eigenspace associated with the eigenvalue $\lambda_{m_0}\in\sigma(-\Delta_{S^{n}};B(\gamma)).$ Set $\sigma^-(-\Delta_{S^{n}};B(\gamma))=\{-\lambda_m \colon  \lambda_m \in \sigma(-\Delta_{S^{n}};B(\gamma))\}$ and $
\cP^{\gamma}(\Phi)=\{\lambda_0 \in \bR \colon  \nabla^2_u\Phi(0,\lambda_0)=L-\lambda_0 K \text{ is not an isomorphism}\}.
$

In the next lemma we formulate basic properties of eigenvalues and eigenspaces of the Laplace-Beltrami operator on the geodesic ball $B(\gamma).$ We omit an easy proof of this lemma.

\begin{Lemma}\label{313D}
Under the above assumptions:
\begin{enumerate}
\item
$\cP^{\gamma}(\Phi)=
\left\{
\begin{array}{ll}
\sigma(-\Delta_{S^{n}};B(\gamma)),& \text{ when } p_- = p,  \\
\sigma^-(-\Delta_{S^{n}};B(\gamma)),& \text{ when }  p_+=p, \\
\sigma(-\Delta_{S^{n}};B(\gamma))\cup \sigma^-(-\Delta_{S^{n}};B(\gamma)),& \text{ when } p_- p_+>0, \\
\end{array}
\right.$
\item $\sigma(K)=\{\frac{1}{\lambda_m}\colon\lambda_m\in\sigma(-\Delta_{S^{n}};B(\gamma))\}$,
\item $V_{K}(\frac{1}{\lambda_m})=\bigoplus\limits^{p}_{i=1}  V^{\gamma}_{-\Delta_{S^{n}}}(\lambda_{m})$,
\item for every $\lambda_m\in\sigma(-\Delta_{S^{n}};B(\gamma))$ the subspace $ V^{\gamma}_{-\Delta_{S^{n}}}(\lambda_{m}) \subset  H^1_0(B(\gamma))$ is finite dimensional,
\item  $H^1_0(B(\gamma))=\mathrm{cl}(\bigoplus\limits^{\infty}_{m=1}  V^{\gamma}_{-\Delta_{S^{n}}}(\lambda_{m}))$.
\end{enumerate}
Moreover, for $\lambda_0 \in \cP^{\gamma} (\Phi)$,
\item if $\lambda_0>0$, then $p_->0$, $\lambda_0 \in \sigma(-\Delta_{S^{n}};B(\gamma))$ and $\ker\nabla^2_u\Phi(0,\lambda_0)=\bigoplus\limits^{p_-}_{i=1} V^{\gamma}_{-\Delta_{S^{n}}}(\lambda_0)$,
\item if $\lambda_0 < 0$, then $p_+>0$, $-\lambda_0 \in \sigma(-\Delta_{S^{n}};B(\gamma))$ and $\ker\nabla^2_u\Phi(0,-\lambda_0)=\bigoplus\limits^{p_+}_{i=1} V^{\gamma}_{-\Delta_{S^{n}}}(-\lambda_0)$.
\end{Lemma}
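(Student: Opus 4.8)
The plan is to reduce everything to the spectral theory of the single compact operator $T$ and then do bookkeeping with the block structures of $L$ and $K$. First I would note that $T\colon H^1_0(B(\gamma))\to H^1_0(B(\gamma))$, defined by $\langle Tu,v\rangle_{H^1_0(B(\gamma))}=\int_{B(\gamma)}uv\,d\sigma$, is exactly the inverse of $-\Delta_{S^{n}}$ under Dirichlet boundary conditions read through the Riesz isomorphism: for $u\in H^1_0(B(\gamma))$ the function $w=Tu$ is the unique weak solution of $-\Delta_{S^{n}}w=u$ in $B(\gamma)$, $w=0$ on $\partial B(\gamma)$. Since $B(\gamma)$ with $\gamma\in(0,\pi)$ is a bounded subdomain of $S^{n}$ with smooth boundary, the embedding $H^1_0(B(\gamma))\hookrightarrow L^2(B(\gamma))$ is compact (Rellich--Kondrachov), hence $T$ is compact; it is manifestly self-adjoint and it is injective, since $\langle Tu,v\rangle=0$ for all $v$ forces $u=0$. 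Consequently $\mu>0$ is an eigenvalue of $T$ with eigenvector $u$ if and only if $u$ is a Dirichlet weak solution of $-\Delta_{S^{n}}u=\mu^{-1}u$, i.e. $\mu=1/\lambda_m$ for some $\lambda_m\in\sigma(-\Delta_{S^{n}};B(\gamma))$ and the eigenspace is $V^{\gamma}_{-\Delta_{S^{n}}}(\lambda_m)$. This gives statement (4) at once (eigenspaces of a compact operator for nonzero eigenvalues are finite dimensional), and the Hilbert basis theorem for a compact self-adjoint injective operator gives statement (5), namely that $H^1_0(B(\gamma))$ is the closure of $\bigoplus_{m=1}^{\infty}V^{\gamma}_{-\Delta_{S^{n}}}(\lambda_m)$.

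Since $K=(T,\dots,T)$ acts block-diagonally on $\bH=\bigoplus_{i=1}^{p}H^1_0(B(\gamma))$, its set of eigenvalues coincides with that of $T$ and $V_K(1/\lambda_m)=\bigoplus_{i=1}^{p}V_T(1/\lambda_m)=\bigoplus_{i=1}^{p}V^{\gamma}_{-\Delta_{S^{n}}}(\lambda_m)$, which is statements (2) and (3) (as usual for such operators $\sigma$ denotes the set of eigenvalues; $0$ lies in the spectrum of $K$ but is not an eigenvalue).

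For statements (1), (6), (7) I would write $\nabla^2_u\Phi(0,\lambda_0)=L-\lambda_0 K$ componentwise: on the $i$-th copy of $H^1_0(B(\gamma))$ it acts as $-\alpha_i\,\Id-\lambda_0 T$. Because $T$ is compact this operator is Fredholm of index zero, so it fails to be an isomorphism if and only if it has a nontrivial kernel, which for $\lambda_0\neq 0$ happens precisely when $-\alpha_i/\lambda_0$ is an eigenvalue of $T$; since all eigenvalues of $T$ are positive this forces $\alpha_i\lambda_0<0$, and the kernel is then exactly $V^{\gamma}_{-\Delta_{S^{n}}}(-\alpha_i\lambda_0)$. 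Thus a block with $\alpha_i=-1$ degenerates exactly when $\lambda_0>0$ and $\lambda_0\in\sigma(-\Delta_{S^{n}};B(\gamma))$, while a block with $\alpha_i=1$ degenerates exactly when $\lambda_0<0$ and $-\lambda_0\in\sigma(-\Delta_{S^{n}};B(\gamma))$. Taking the union over $i$ yields the three cases of (1) according to whether $p_-=p$, $p_+=p$, or $p_-p_+>0$. For (6): if $\lambda_0>0$ lies in $\cP^{\gamma}(\Phi)$, then no $\alpha_i=1$ block can degenerate, hence $p_->0$ and $\lambda_0\in\sigma(-\Delta_{S^{n}};B(\gamma))$, and the kernel is the direct sum of the kernels of the $\alpha_i=-1$ blocks, i.e. $\bigoplus_{i=1}^{p_-}V^{\gamma}_{-\Delta_{S^{n}}}(\lambda_0)$ after reindexing; statement (7) follows by replacing $\lambda_0$ with $-\lambda_0$ and interchanging the roles of $p_-$ and $p_+$.

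There is no genuine obstacle here — this is precisely why the paper omits the proof. The only point deserving care is the identification $T=(-\Delta_{S^{n}})^{-1}$ together with the compactness of $H^1_0(B(\gamma))\hookrightarrow L^2(B(\gamma))$, which rests on $B(\gamma)$ being a bounded smooth subdomain of $S^{n}$; once that is in place, everything else is the spectral theorem for compact self-adjoint operators combined with the block structure of $L$ and $K$.
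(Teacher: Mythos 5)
Your proof is correct, and since the paper explicitly omits the proof of Lemma \ref{313D} as ``easy,'' the argument you give --- identifying $T$ as the inverse Dirichlet Laplacian via the Riesz isomorphism, invoking Rellich--Kondrachov for compactness, and then reading off $\cP^{\gamma}(\Phi)$ and the kernels blockwise from $-\alpha_i\,\Id-\lambda_0 T$ --- is precisely the standard argument the authors had in mind. No gaps; your parenthetical remark that $\sigma(K)$ must be read as the set of (nonzero) eigenvalues is the right reading of the statement.
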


Let us remind that $\nabla_u\Phi(0,\lambda)=0$ for every $\lambda\in\bR.$

\begin{Definition} \label{defbif} A solution $(0,\lambda)$  of the equation $\nabla_u\Phi(u,\lambda)=0$ is said to be trivial.
A point $(0,\lambda_0)$ is said to be a bifurcation point of solutions of the equation  $\nabla_u\Phi(u,\lambda)=0$ if $(0,\lambda_0)\in \mathrm{cl} \{(u,\lambda)\in \bH\times\bR\colon \nabla_u\Phi(u,\lambda)=0, u\neq 0 \}$.
\end{Definition}

In the following theorem we formulate a necessary condition for the existence of bifurcation points of solutions of equation $\nabla_u\Phi(u,\lambda)=0.$ This
theorem is a direct consequence of Lemmas \ref{operatorLBzD}, \ref{313D}  and the implicit function theorem.

\begin{Theorem}\label{P(Fi)}
If $(0,\lambda_0)$ is a bifurcation point of solutions of the equation $\nabla_u\Phi(u,\lambda)=0$, then $\lambda_0 \in \mathcal{P}^{\gamma}(\Phi)$.
\end{Theorem}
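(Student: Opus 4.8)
The plan is to deduce this theorem directly from the structure of $\nabla_u\Phi$ established in Lemma \ref{operatorLBzD} together with the spectral description in Lemma \ref{313D}, via a contrapositive argument using the implicit function theorem. Suppose $(0,\lambda_0)$ is a bifurcation point, so by Definition \ref{defbif} there is a sequence $(u_k,\lambda_k)$ of solutions of $\nabla_u\Phi(u,\lambda)=0$ with $u_k\neq 0$ and $(u_k,\lambda_k)\to(0,\lambda_0)$. Assume, for contradiction, that $\lambda_0\notin\cP^{\gamma}(\Phi)$, i.e. that $\nabla^2_u\Phi(0,\lambda_0)=L-\lambda_0 K$ is an isomorphism of $\bH$.

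First I would record that, by Lemma \ref{operatorLBzD}, the map $(u,\lambda)\mapsto\nabla_u\Phi(u,\lambda)=Lu-\lambda K(u)-\nabla_u\eta_0(u,\lambda)$ is a $C^1$ map $\bH\times\bR\to\bH$ (indeed $\Phi$ is $C^2$), it vanishes identically on the trivial branch $\{0\}\times\bR$, and its partial derivative in $u$ at $(0,\lambda_0)$ equals $L-\lambda_0 K$ because $\nabla^2_u\eta_0(0,\lambda)=0$ for all $\lambda$. Next, since by assumption $L-\lambda_0K$ is a bounded linear isomorphism of the Hilbert space $\bH$ onto itself, the implicit function theorem applies at $(0,\lambda_0)$: there exist neighbourhoods $U$ of $0$ in $\bH$ and $J$ of $\lambda_0$ in $\bR$, and a $C^1$ map $\psi\colon J\to U$ with $\psi(\lambda_0)=0$, such that for $(u,\lambda)\in U\times J$ one has $\nabla_u\Phi(u,\lambda)=0$ if and only if $u=\psi(\lambda)$. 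Because $\nabla_u\Phi(0,\lambda)=0$ for every $\lambda$, uniqueness forces $\psi(\lambda)=0$ on $J$; hence the only solutions of $\nabla_u\Phi(u,\lambda)=0$ in $U\times J$ are the trivial ones $(0,\lambda)$.

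This contradicts the existence of the sequence $(u_k,\lambda_k)\to(0,\lambda_0)$ with $u_k\neq 0$: for $k$ large, $(u_k,\lambda_k)\in U\times J$, yet $u_k\neq 0$. Therefore $\lambda_0\in\cP^{\gamma}(\Phi)$, proving the theorem. Concerning obstacles: there is essentially no deep difficulty here, since the hard analytic content (that $\nabla_u\eta_0$ is $C^1$ with vanishing derivative at $0$, that $K$ is compact, that $L$ is Fredholm, and the spectral picture) is already packaged in Lemmas \ref{operatorLBzD} and \ref{313D}. The only point meriting a line of care is the verification that $L-\lambda_0K$ being an isomorphism is exactly the hypothesis $\lambda_0\notin\cP^{\gamma}(\Phi)$ — which is the definition of $\cP^{\gamma}(\Phi)$ — and the observation that the equivariance of all maps involved is not needed for this particular statement (it will be exploited only later, when computing bifurcation indices). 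One may optionally remark that the full strength of the IFT is not required: it suffices that, near $(0,\lambda_0)$, $\nabla_u\Phi(\cdot,\lambda)$ is a small $C^1$ perturbation of the invertible operator $L-\lambda_0K$, so its zero set is locally a graph over the $\lambda$-axis.
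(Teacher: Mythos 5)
Your proposal is correct and is precisely the argument the paper has in mind: the authors state that the theorem "is a direct consequence of Lemmas \ref{operatorLBzD}, \ref{313D} and the implicit function theorem," and your contrapositive IFT argument (invertibility of $L-\lambda_0 K$ forces the local zero set to coincide with the trivial branch) is the standard way to fill in that one-line proof. No gaps.
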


Let $(t,\theta)$ be the geodesic spherical coordinate on $S^n$.
The eigenvectors of problem \eqref{eigenDirichlet} are of the form
$
u(t,\theta)= T_m(\lambda,t) v_m(\theta),
$
see \cite{Bang, Bang1}, where $m \geq 0$ and $v_m(\theta)$ is a spherical harmonic of $n$ variables and  degree $m$, i.e. $v_m$ is a solution of the equation
\[
-\LB v(\theta)=\beta_m v(\theta), \text{ where } \beta_m=m(m+n-2),
\]
and $T_m(\lambda,t)$ is a solution of the equation
$$
T''(t)+(n-1)(\cot t) T'(t)+\left(\lambda-\frac{\beta_m}{\sin^2t}\right)T(t)=0.
$$
The explicit formula for $T_m(\lambda,t)$ is given in \cite{Bang}.

For $m \geq 0$ define $A^{\gamma}_m=\{\lambda > 0 \colon  T_m(\lambda,\gamma)=0\}.$  From the general theory of the eigenvalue problem the set  $A^{\gamma}_m$ is countable and $\displaystyle \sigma(-\Delta_{S^n};B(\gamma))= \bigcup_{m=0}^{\infty} A^{\gamma}_m,$ see  \cite{Bang}. Moreover, if $\lambda_0\in A^{\gamma}_m$, then $V^{\gamma}_{-\Delta_{S^n}}(\lambda_0)$ is equivalent as  representation of $\SO(n)$ to $\cH^n_m$ , where
$\cH^n_m$ denotes the linear space of harmonic, homogeneous polynomials of $n$ independent variables of degree $m$, restricted to the sphere $S^{n-1}$, see Section \ref{appendix}.

\begin{Remark} \label{stru}
Fix $\lambda_0 \in \sigma(-\Delta_{S^n};B(\gamma))$ and define $\Gamma^{\gamma}(\lambda_0)=\{m \geq 0 \colon  \lambda_0 \in A_m^{\gamma}\}.$ Since the multiplicity of $\lambda_0$ is finite, $\mathrm{card}(\Gamma^{\gamma} (\lambda_0)) < \infty.$ Without loss of generality one can assume that $\Gamma^{\gamma}(\lambda_0)=\{m_1, \ldots,m_q\}$ and that $0 \leq m_1 < \ldots < m_q.$ Finally we obtain that $V^{\gamma}_{-\Delta_{S^n}}(\lambda_0) \approx_{\SO(n)}  \cH^n_{m_1} \oplus \ldots \oplus \cH^n_{m_q}$, i.e.  representations $V^{\gamma}_{-\Delta_{S^n}}(\lambda_0)$   and $\cH^n_{m_1} \oplus \ldots \oplus \cH^n_{m_q}$  are $\SO(n)$-equivalent.
\end{Remark}

In the theorem below we  discuss the special case of hemisphere i.e. $\gamma=\pi/2.$

\begin{Theorem}[Theorem 6 of \cite{Bang} ]\label{PrzWlasneNaPolkoliD}
Suppose that $\gamma=\pi/2.$ Then
\begin{eqnarray*}
&&\sigma\left(-\Delta_{S^{n}};B(\pi/2)\right) =
\left\{\lambda_m=m(n+m-1)\colon m\in\bN\right\}
\end{eqnarray*}
and
$ V^{\pi /2}_{-\Delta_{S^{n}}}(\lambda_m) \approx_{\SO(n)}   \bigoplus\limits_{l\colon\exists_{p \in \bN \cup \{0\}}m=2p+l+1} \cH^n_l$.
Moreover, the multiplicity of $\lambda_m$ is $\binom{n+m-2}{n-1}$  for every $m\in\bN$.
\end{Theorem}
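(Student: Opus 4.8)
The statement is Theorem~6 of \cite{Bang}, so I will only describe the structure of the argument; it rests on a reflection across the equator followed by a parity count, and the explicit form of $T_m(\lambda,t)$ from \cite{Bang} does the rest. The boundary $\partial B(\pi/2)$ is the equator $\{x\in S^n\colon x_{n+1}=0\}\cong S^{n-1}$, which is exactly the fixed-point set of the isometric involution $\tau\colon S^n\to S^n$, $\tau(x_1,\dots,x_n,x_{n+1})=(x_1,\dots,x_n,-x_{n+1})$. First I would show that the odd extension $u\mapsto\tilde u$, with $\tilde u=u$ on $B(\pi/2)$ and $\tilde u=-u\circ\tau$ on $S^n\setminus\overline{B(\pi/2)}$, is an isomorphism from the Dirichlet eigenspace of $-\Delta_{S^{n}}$ on $B(\pi/2)$ for the eigenvalue $\lambda$ onto the space of eigenfunctions of $-\Delta_{S^{n}}$ on all of $S^n$ for $\lambda$ satisfying $w\circ\tau=-w$: the extension is a priori only of class $H^1$, but testing against the $\tau$-odd part of a smooth test function (which vanishes on the equator, hence lies in $H^1_0(B(\pi/2))$) shows it is a weak eigenfunction on $S^n$, hence smooth; conversely a $\tau$-anti-invariant eigenfunction vanishes on the fixed-point set of $\tau$ and so restricts to a Dirichlet eigenfunction on $B(\pi/2)$. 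Since $\tau$ commutes with the $\SO(n)$-action (which leaves $x_{n+1}$ fixed), this isomorphism is $\SO(n)$-equivariant, so it suffices to compute the $\tau$-anti-invariant part of each eigenspace on $S^n$.

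Next I would recall that $\sigma(-\Delta_{S^{n}};S^n)=\{\mu_k=k(k+n-1)\colon k\ge 0\}$, with eigenspace the restriction to $S^n$ of $\cH^{n+1}_k$, the harmonic homogeneous polynomials of degree $k$ in $n+1$ variables, and invoke the classical branching $\cH^{n+1}_k$ restricted to $\SO(n)\subset\SO(n+1)$ is $\SO(n)$-equivalent to $\bigoplus_{l=0}^{k}\cH^n_l$, where the summand of type $\cH^n_l$ is spanned by the solid harmonics $x\mapsto |x'|^{\,l}\,g_{k,l}(x_{n+1}/|x|)\,Y(x'/|x'|)$ with $Y\in\cH^n_l$, $x'=(x_1,\dots,x_n)$, and $g_{k,l}$ a Gegenbauer polynomial of degree $k-l$; in the geodesic polar coordinates $(t,\theta)$ of the paper this summand is exactly $T_l(\mu_k,t)\,Y(\theta)$ with $T_l(\mu_k,t)=(\sin t)^{\,l}g_{k,l}(\cos t)$. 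Because a Gegenbauer polynomial of degree $d$ has parity $(-1)^d$, this summand is $\tau$-anti-invariant iff $k-l$ is odd; equivalently $T_l(\mu_k,\pi/2)=g_{k,l}(0)=0$ exactly when $k-l$ is odd. Hence the $\tau$-anti-invariant part of $\cH^{n+1}_k$ is $\SO(n)$-equivalent to $\bigoplus_{0\le l\le k,\ k-l\ \mathrm{odd}}\cH^n_l$.

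For $k=0$ this anti-invariant part is trivial, while for $k\ge1$ it is nonzero since it contains the summand $l=k-1$; as $k\mapsto\mu_k$ is strictly increasing on $\{1,2,\dots\}$, the reflection isomorphism gives $\sigma(-\Delta_{S^{n}};B(\pi/2))=\{\lambda_m=m(n+m-1)\colon m\in\bN\}$ together with $V^{\pi/2}_{-\Delta_{S^{n}}}(\lambda_m)\approx_{\SO(n)}\bigoplus_{0\le l\le m-1,\ m-l\ \mathrm{odd}}\cH^n_l$, whose index set coincides with $\{l\colon\exists_{p\in\bN\cup\{0\}}\,m=2p+l+1\}$. For the multiplicity, sum dimensions and use the harmonic decomposition of the homogeneous polynomials of degree $m-1$ in $n$ variables, $\bR[x_1,\dots,x_n]_{m-1}=\bigoplus_{j\ge0}|x|^{2j}\cH^n_{m-1-2j}$, to get $\dim V^{\pi/2}_{-\Delta_{S^{n}}}(\lambda_m)=\sum_{0\le l\le m-1,\ m-l\ \mathrm{odd}}\dim\cH^n_l=\binom{n+m-2}{n-1}$.

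The one nonroutine point is the parity assertion in the middle step — that the $\cH^n_l$-isotypic component of $\cH^{n+1}_k$ restricted to $\SO(n)$ has $\tau$-parity $(-1)^{k-l}$, equivalently that $T_l(\mu_k,\pi/2)$ vanishes precisely when $k-l$ is odd. This is exactly what the explicit hypergeometric (Gegenbauer) formula for $T_m(\lambda,t)$ in \cite{Bang} yields; everything else is bookkeeping with standard facts about spherical harmonics and the odd-reflection principle.
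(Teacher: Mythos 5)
Your argument is correct, and it is worth noting that the paper itself offers no proof of this statement: it is quoted verbatim as Theorem~6 of \cite{Bang}, where it is obtained by writing out $T_m(\lambda,t)$ explicitly as a hypergeometric (Gegenbauer--Legendre) expression and locating its zeros at $t=\pi/2$, the $\SO(n)$-structure then being read off from Remark~\ref{stru}. Your route --- odd reflection across the equator onto the $\tau$-anti-invariant part of $L^2(S^n)$, followed by the branching rule $\cH^{n+1}_k\big|_{\SO(n)}\approx\bigoplus_{l=0}^k\cH^n_l$ and the parity $(-1)^{k-l}$ of the Gegenbauer factor --- is a genuinely different and more structural derivation: it produces the spectrum, the $\SO(n)$-isotypic decomposition, and (via the harmonic decomposition of $\bR[x_1,\dots,x_n]_{m-1}$) the multiplicity $\binom{n+m-2}{n-1}$ all at once, and it makes transparent \emph{why} only the parities $m-l$ odd survive, which is exactly the content of Corollary~\ref{55}. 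The only point where you still lean on the explicit formula is the parity of the radial factor, but as you note this is equivalent to the elementary fact that a Gegenbauer polynomial of odd degree vanishes at the origin, so nothing essential from \cite{Bang} beyond the separation-of-variables ansatz is needed. The one step I would insist you write out carefully is the reflection isomorphism at the $H^1$ level (that the odd extension of an $H^1_0(B(\pi/2))$ function lies in $H^1(S^n)$ and is a weak eigenfunction), but your sketch of testing against the $\tau$-odd part of a test function is the right mechanism and closes that gap. In short: correct, self-contained where the paper merely cites, and arguably a cleaner proof of the precise $\SO(n)$-equivariant statement actually used in Theorems~\ref{pmpn} and~\ref{pmpn1}.
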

The following corollary is a direct consequence of the above theorem.

\begin{Corollary}\label{55}
Fix $\lambda_m\in \sigma(-\Delta_{S^{n}};B(\pi/2))$. Then
\begin{enumerate}[1)]
\item if $m$ is even, then
$ V^{\pi/2}_{-\Delta_{S^{n}}}(\lambda_m) \approx_{\SO(n)}    \cH^n_{1}\oplus\cH^n_{3}\oplus\ldots\oplus\cH^n_{m-1}$,
\item if $m$ is odd, then
$ V^{\pi/2}_{-\Delta_{S^{n}}}(\lambda_m) \approx_{\SO(n)}   \cH^n_{0}\oplus\cH^n_{2}\oplus\ldots\oplus\cH^n_{m-1}$.
\end{enumerate}
Consequently,
$\cH^n_{m-1}\subset V^{\pi/2}_{-\Delta_{S^{n}}}(\lambda_m)$ and $\cH^n_{m-1}\not\subset V^{\pi/2}_{-\Delta_{S^{n}}}(\lambda_{\widehat{m}})$ for every $0<\widehat{m}<m$.
\end{Corollary}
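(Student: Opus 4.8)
The plan is to derive Corollary \ref{55} directly from Theorem \ref{PrzWlasneNaPolkoliD} by unwinding the index set in the decomposition $V^{\pi/2}_{-\Delta_{S^n}}(\lambda_m) \approx_{\SO(n)} \bigoplus_{l} \cH^n_l$, where the sum is over all $l \ge 0$ such that $m = 2p + l + 1$ for some $p \in \bN \cup \{0\}$. First I would rewrite the defining condition: $l$ appears in the sum if and only if $m - l - 1 = 2p$ for some integer $p \ge 0$, i.e. if and only if $m - l - 1$ is a nonnegative even integer, equivalently $0 \le l \le m-1$ and $l \equiv m-1 \pmod 2$. So the set of admissible $l$ is precisely $\{l : 0 \le l \le m-1,\ l \equiv m - 1 \pmod 2\}$.

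Next I would split into the two parity cases. If $m$ is even, then $m - 1$ is odd, so the admissible $l$ are the odd integers in $[0, m-1]$, namely $l \in \{1, 3, 5, \ldots, m-1\}$ (note $m-1$ is odd here, so it is the largest such value). This gives $V^{\pi/2}_{-\Delta_{S^n}}(\lambda_m) \approx_{\SO(n)} \cH^n_1 \oplus \cH^n_3 \oplus \cdots \oplus \cH^n_{m-1}$, which is item (1). If $m$ is odd, then $m - 1$ is even, so the admissible $l$ are the even integers in $[0, m-1]$, namely $l \in \{0, 2, 4, \ldots, m-1\}$; here $\cH^n_0$ is the one-dimensional trivial summand corresponding to the radial eigenfunction. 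This yields $V^{\pi/2}_{-\Delta_{S^n}}(\lambda_m) \approx_{\SO(n)} \cH^n_0 \oplus \cH^n_2 \oplus \cdots \oplus \cH^n_{m-1}$, which is item (2).

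For the final ``Consequently'' assertion, the inclusion $\cH^n_{m-1} \subset V^{\pi/2}_{-\Delta_{S^n}}(\lambda_m)$ is immediate since $l = m - 1$ always satisfies the admissibility condition (take $p = 0$ in $m = 2p + l + 1$), so $\cH^n_{m-1}$ is one of the direct summands in both parity cases. For the non-inclusion, fix $\widehat m$ with $0 < \widehat m < m$. By the two displayed decompositions applied to $\lambda_{\widehat m}$, every summand $\cH^n_l$ of $V^{\pi/2}_{-\Delta_{S^n}}(\lambda_{\widehat m})$ has index $l \le \widehat m - 1 < m - 1$, so the index $m-1$ does not occur among the summands; since the $\cH^n_l$ for distinct $l$ are pairwise non-equivalent irreducible-type pieces (distinct spherical-harmonic degrees give non-isomorphic $\SO(n)$-representations), $\cH^n_{m-1}$ cannot be a subrepresentation of $V^{\pi/2}_{-\Delta_{S^n}}(\lambda_{\widehat m})$.

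This argument is essentially bookkeeping, so there is no serious obstacle; the only point requiring a word of care is the last step, where one must invoke that $\cH^n_l$ and $\cH^n_{l'}$ are $\SO(n)$-inequivalent for $l \ne l'$ (a standard fact about spaces of spherical harmonics, recalled in Section \ref{appendix}) in order to pass from ``$m-1$ is not among the summand indices'' to ``$\cH^n_{m-1}$ is not a subrepresentation.'' Everything else is the parity case analysis above.
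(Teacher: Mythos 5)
Your proposal is correct and matches the paper's intent exactly: the paper gives no proof, stating only that the corollary is a direct consequence of Theorem \ref{PrzWlasneNaPolkoliD}, and your parity bookkeeping on the index set $\{l : m = 2p+l+1,\ p \ge 0\}$ together with the pairwise inequivalence of the $\cH^n_l$ is precisely the intended argument.
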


\section{Main results}
\label{main}

In this section we study continua of weak solutions of non-cooperative elliptic systems considered on a geodesic ball $B(\gamma)$, where $\gamma\in(0,\pi)$.
 Consider the system

\begin{equation}\label{Dirichlet1}
\left\{
\begin{array}{rclll}
\Lambda \Delta_{S^{n}} u & = &\nabla_u F(u,\lambda) & \text{ in } & B(\gamma), \\ u & = & 0  & \text{ on } & \partial B(\gamma),
\end{array}
\right.
\end{equation}
where $F$ and  $ \Lambda$ satisfy assumptions (A1)-(A4) of Section \ref{prelim}.
 Recall that $u\in\bH=\bigoplus\limits_{i=1}^p H^1_0(B(\gamma))$ is a weak solution of the above system if and only if $u$ is a critical point of the  functional $\Phi \colon  \bH\times\bR\to\bR$  given by formula \eqref{PhiD}. That is why we study in this section solutions of the equation $\nabla_u \Phi(u,\lambda)=0.$
Denote by $C(\lambda_0)\subset\bH\times\bR$  the continuum of
$
\mathrm{cl} \{(u,\lambda)\in \bH\times\bR\colon \nabla_u\Phi(u,\lambda)=0, u \neq 0 \}
$
containing $(0,\lambda_0)$.

We  first prove unboundedness of continua of weak solutions of system \eqref{Dirichlet1} for $\gamma=\frac{\pi}{2},$ bifurcating from the set of trivial ones.
In other words  we  show that the second possibility in the Symmetric Rabinowitz alternative, see   Theorem \ref{alternative}, is eliminated. More precisely, we will prove that  formula \eqref{suml} is never satisfied.

 Let $\mu_m=\dim V^{\pi/2}_{-\Delta_{S^{n}}}(\lambda_{m})$ and $\nu_m=\mu_1+\ldots+\mu_{m}$  for $m\in\bN$.

\begin{Theorem}\label{pmpn}
Fix $\lambda_{m_0} \in \sigma(-\Delta_{S^n};B(\pi/2)) \setminus \{\lambda_1\}$. Then the continuum $C(\pm \lambda_{m_0}) \subset  \bH\times\bR$ of weak solutions of system \eqref{Dirichlet1} is unbounded.
\end{Theorem}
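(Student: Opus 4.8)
The plan is to argue by contradiction, combining the Symmetric Rabinowitz alternative (Theorem~\ref{alternative}) with the explicit formulas for the bifurcation indices from Lemmas~\ref{pm} and \ref{pp} and with the decomposition of the eigenspaces on the hemisphere recorded in Corollary~\ref{55}. Suppose that $C(\pm\lambda_{m_0})$ is bounded. By Theorem~\ref{P(Fi)} every trivial solution $(0,\mu)$ lying on it satisfies $\mu\in\cP^{\pi/2}(\Phi)$, hence $\mu\in\{\pm\lambda_{k}:k\in\bN\}$, and by Theorem~\ref{alternative} only finitely many such $\mu$ occur on the continuum; let $K\subset\bN$ be the finite nonempty set of indices $k$ with $\lambda_{k}$ or $-\lambda_{k}$ on $C(\pm\lambda_{m_0})$, so that $m_0\in K$, and set $M=\max K$. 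Since $\lambda_{m_0}\neq\lambda_1$ we have $M\ge m_0\ge 2$. The boundedness of the continuum forces relation~\eqref{suml}, i.e.\ the sum over all these bifurcation points of the bifurcation indices~\eqref{bifind} vanishes in the Euler ring $U(\SO(n))$. We shall contradict this by exhibiting a coordinate of that sum which is nonzero; as a by-product, the nonvanishing of $\BIF(\pm\lambda_{m_0})$ shows that $(0,\pm\lambda_{m_0})$ is indeed a bifurcation point and $C(\pm\lambda_{m_0})$ a genuine continuum.

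The coordinate we use is the one carried by the top spherical harmonic $\cH^{n}_{M-1}$ of the $M$-th eigenspace. By Remark~\ref{stru}, Theorem~\ref{PrzWlasneNaPolkoliD} and Corollary~\ref{55}, the representation $\cH^{n}_{M-1}$ is an $\SO(n)$-direct summand of $V^{\pi/2}_{-\Delta_{S^{n}}}(\lambda_M)$ of multiplicity one, while $\cH^{n}_{M-1}$ is \emph{not} a subrepresentation of $V^{\pi/2}_{-\Delta_{S^{n}}}(\lambda_{k})$ for any $k<M$, since the harmonic degrees occurring in the latter are at most $k-1<M-1$. By Lemma~\ref{313D} the kernel $\ker\nabla^{2}_{u}\Phi(0,\pm\lambda_{k})$ is $p_{\mp}$ copies of $V^{\pi/2}_{-\Delta_{S^{n}}}(\lambda_{k})$, and by Lemmas~\ref{pm} and \ref{pp} the bifurcation index at $\pm\lambda_{k}$ is a difference of products of the basic degrees attached to the harmonics occurring in that kernel. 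Hence, among all the bifurcation points on $C(\pm\lambda_{m_0})$, the representation $\cH^{n}_{M-1}$ enters the bifurcation indices only through the point or points $\pm\lambda_{M}$.

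To make this quantitative we apply the homomorphism of Remark~\ref{iso2} and pass to $U(\SO(2))$, in which every basis element distinct from the unit $[\SO(2)/\SO(2)]$ generates a square-zero ideal; consequently the $[\SO(2)/\bZ_{M-1}]$-coefficient of a difference of products of basic degrees is a $\bZ$-linear combination of the $[\SO(2)/\bZ_{M-1}]$-coefficients of those basic degrees. Since the restriction of $\cH^{n}_{j}$ to $\SO(2)$ contains no mode of order $M-1$ when $j\le M-2$, whereas $\cH^{n}_{M-1}$ does contain it (once, and nontrivially because $M-1\ge 1$), the $[\SO(2)/\bZ_{M-1}]$-coefficient of $\BIF(\pm\lambda_{k})$ vanishes for every $k\in K$ with $k<M$, while by Lemmas~\ref{pm} and \ref{pp} it is nonzero for $\BIF(\lambda_{M})$ and for $\BIF(-\lambda_{M})$. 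Therefore the $[\SO(2)/\bZ_{M-1}]$-coefficient of the image in $U(\SO(2))$ of the left-hand side of~\eqref{suml} equals that of $\BIF(\lambda_{M})+\BIF(-\lambda_{M})$ (or of whichever of these two terms occurs), which is nonzero; this contradicts~\eqref{suml}, and the conclusion follows, the case of $C(-\lambda_{m_0})$ (when $-\lambda_{m_0}\in\cP^{\pi/2}(\Phi)$) being identical.

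I expect the main obstacle to be this last quantitative step: verifying from Lemmas~\ref{pm}, \ref{pp}, Corollary~\ref{55} and Remark~\ref{iso2} that the $[\SO(2)/\bZ_{M-1}]$-coefficient of the bifurcation index at $\pm\lambda_{M}$ is genuinely nonzero and, in the non-cooperative case $p_{-}p_{+}>0$ (when both $\lambda_{M}$ and $-\lambda_{M}$ may lie on the continuum), that the contributions of $\lambda_{M}$ and $-\lambda_{M}$, governed respectively by $p_{-}$ and $p_{+}$, do not cancel. This is exactly where the hypothesis $\lambda_{m_0}\neq\lambda_1$ (which guarantees $M-1\ge 1$, hence a nontrivial mode) and the hemisphere structure of $\sigma(-\Delta_{S^{n}};B(\pi/2))$ are used.
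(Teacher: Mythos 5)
Your strategy is exactly the paper's: assume boundedness, invoke Theorem~\ref{alternative} to get a finite set of bifurcation points with vanishing index sum, push everything into $U(\SO(2))$ via Remark~\ref{iso2}, and isolate the coordinate attached to the largest mode $M-1$, which by Corollary~\ref{55} and Lemmas~\ref{pm}\eqref{pm1}, \ref{pp}\eqref{pp1} is hit only by $\BIF(\pm\lambda_M)$. The setup, the identification of $M=\max K$, and the observation that the $[\SO(2)/\bZ_{M-1}]$-coefficient of $\BIF(\pm\lambda_k)$ vanishes for $k<M$ are all correct and match the paper.

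However, there is a genuine gap at the decisive step, and you flag it yourself: you assert that the $[\SO(2)/\bZ_{M-1}]$-coefficient of $\BIF(\lambda_M)+\BIF(-\lambda_M)$ is nonzero because each summand's coefficient is nonzero, which is of course not an argument, and then defer the verification as ``the main obstacle.'' That verification is precisely where the paper does its real work: by Lemmas~\ref{pm}\eqref{pm1} and \ref{pp}\eqref{pp1} the two contributions are $(-1)^{1+\nu_M p_-}p_-$ and $(-1)^{1+\nu_M p_+}p_+$, and one must check their sum never vanishes. The paper does this by a four-case parity analysis on $(p_-,p_+)$: when both are even all indices lie in $U_-(\SO(2))$ and the coefficient is $-p_--p_+<0$; when the parities differ, cancellation would force $p_-=\pm p_+$, impossible since $p_-\not\equiv p_+\pmod 2$; when both are odd, $\nu_M p_-$ and $\nu_M p_+$ have the same parity, so the two signs agree and the sum is $\pm(p_-+p_+)\neq 0$. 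Without this computation your proof is incomplete, since a priori the two nonzero contributions could cancel (and ruling this out genuinely uses the explicit sign $(-1)^{1+\nu_M p_\pm}$, not just nonvanishing). The gap is fillable and the claim is true, but as written the heart of the argument is missing.
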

\begin{proof} We prove the unboundedness of the continuum $C(\lambda_{m_0}).$ The proof for the continuum $C(-\lambda_{m_0})$ is similar and left to the reader.
Since $\lambda_{m_0} \neq \lambda_1,$ from Theorem \ref{nieprzywiedlnosc} it follows that $V_{-\Delta_{S^n}}(\lambda_{m_0})$ is a nontrivial representation of  $\SO(n).$
Suppose, contrary to our claim, that the continuum  $C(\lambda_{m_0}) \subset \bH \times\bR$ is bounded. Then from the Symmetric Rabinowitz alternative, see Theorem \ref{alternative}, it follows that
\begin{enumerate}
\item $C(\lambda_{m_0}) \cap (\{0\} \times \bR)=\{0\} \times \{\widehat{\lambda}_1,  \ldots, \widehat{\lambda}_s\} \subset \{0\} \times \cP^{\pi/2}(\Phi),$
\item $\displaystyle \sum_{j=1}^s \BIF_{\SO(n)}(\widehat{\lambda}_j)=\Theta\in U(\SO(n))$.
\end{enumerate}
Without loss of generality one can assume that $\widehat{\lambda}_1 < \ldots < \widehat{\lambda}_{s'} < 0 <  \widehat{\lambda}_{s'+1} < \ldots < \widehat{\lambda}_s.$  Since $\{\widehat{\lambda}_1,  \ldots, \widehat{\lambda}_s\}  \subset \cP(\Phi),$ there are $\lambda_{m_1}, \ldots, \lambda_{m_{s'}}, \lambda_{m_{s'+1}}, \ldots,\lambda_{m_s} \in \sigma(-\Delta_{S^n};B(\frac{\pi}{2}))$ such that $\widehat{\lambda}_j=-\lambda_{m_j}$ for $j=1, \ldots,s'$ and $\widehat{\lambda}_j=\lambda_{m_j}$ for $j=s'+1,\ldots,s$, i.e.
$$-\lambda_{m_1} < \ldots < -\lambda_{m_{s'}} < 0 < \lambda_{m_{s'+1}} < \ldots < \lambda_{m_s}$$ and
$m_1> \ldots > m_{s'}$, $m_s > \ldots >m_{s'+1}.$

\noindent By Remark \ref{iso2} we obtain
$\displaystyle \sum_{j=1}^s \BIF_{\SO(2)}(\widehat{\lambda}_j) = i^{\star}\left(\sum_{j=1}^s \BIF_{\SO(n)}(\widehat{\lambda}_j)  \right)
=\Theta \in U(\SO(2)).
$
That is why  we obtain the following equality
\begin{equation} \label{sum} \sum_{j=1}^{s'} \BIF_{\SO(2)}(-\lambda_{m_j}) + \sum_{j=s'+1}^{s} \BIF_{\SO(2)}(\lambda_{m_j}) = \Theta \in U(\SO(2)).
\end{equation}
What is left is to show that the above equality is never satisfied.
In the rest of the proof  we consider four cases.

\noindent {\bf Case:} $p_-, p_+ \in 2\bN.$  Since $p_-, p_+$ are even, from Lemmas  \ref{pm}\eqref{pm2}, \ref{pp}\eqref{pp2} it follows that
$$\BIF_{\SO(2)}(\lambda_{m_{s'+1}}), \ldots, \BIF_{\SO(2)}(\lambda_{m_{s}}) \in U_-(\SO(2))
$$
and that
$$ \BIF_{\SO(2)}(-\lambda_{m_1}),\ldots, \BIF_{\SO(2)}(-\lambda_{m_{s'}})  \in U_-(\SO(2)).$$

\noindent To complete the proof it is enough to note that $ \BIF_{\SO(2)}(\lambda_{m_s})   \in U_-(\SO(2)) \setminus \{\Theta\}.$ Indeed from   Lemma \ref{pm}\eqref{pm1} it follows that $\BIF_{\SO(2)}(\lambda_{m_s}) \neq \Theta,$ which contradicts equality \eqref{sum}.

\noindent {\bf Case:} $p_- \in 2\bN, p_+ \in 2\bN+1.$  Since $p_-$ is even, from Lemmas \ref{pm}\eqref{pm1}, \ref{pm}\eqref{pm2} we obtain  that   $\BIF_{\SO(2)}(\lambda_{m_{s'+1}}), \ldots, \BIF_{\SO(2)}(\lambda_{m_{s}}) \in U_-(\SO(2)) \setminus \{\Theta\}.$ From Lemma \ref{pm}\eqref{pm1}  we obtain $\BIF_{\SO(2)}(\lambda_{m_s}) =(1,\alpha_1,\ldots, \alpha_{m_s-2},-p_-, 0,\ldots) \in U_-(\SO(2)).$ Moreover, for $j=s'+1,\ldots,s-1$ if $\BIF_{\SO(2)}(\lambda_{m_j}) =(\alpha_0,\alpha_1, \ldots, \alpha_k,\ldots)$ then $\alpha_k = 0, \text{ for } k \geq m_s-1.$  From Lemma \ref{pp}\eqref{pp1} we obtain $\BIF_{\SO(2)}(-\lambda_{m_1}) =(\alpha_0  ,\alpha_1,\ldots, \alpha_{m_1-2},(-1)^{1+\nu_{m_1}p_+} p_+, 0,\ldots) \in U(\SO(2)).$  Moreover, for $j=2,\ldots,s'$ if $\BIF_{\SO(2)}(-\lambda_{m_j}) =(\alpha_0,\alpha_1,\ldots, \alpha_k,\ldots)$ then $\alpha_k = 0, \text{ for } k \geq m_1-1.$  From the above reasoning and formula \eqref{sum} it follows that $m_1=m_s$ and that the $m_s$-th coord
 inate of formula \eqref{sum} equals $-p_- +  (-1)^{1+\nu_{m_1}p_+} p_+=0.$ Since $p_- \neq p_+$, formula \eqref{sum} is not fulfilled, a contradiction.

\noindent {\bf Case:} $p_- \in 2\bN+1, p_+ \in 2\bN.$  A proof is in fact the same as the proof of the previous case.

\noindent {\bf Case:} $p_-, p_+ \in 2\bN+1.$  In the first case we have considered the numbers $p_-$, $ p_+$ of the same even parity.  Now  the numbers $p_-$, $p_+$ are of the same but odd parity. In this case the bifurcation indexes are not elements of $U_-(\SO(2)).$ Taking into account Lemmas \ref{pm}\eqref{pm1}, \ref{pp}\eqref{pp1} and formula \eqref{sum} we obtain $m_1=m_s.$ Moreover, the  $m_s$-th coordinate of formula \eqref{sum}  has the following form
\begin{equation} \label{mssum}
(-1)^{1+\nu_{m_1}p_-} p_- + (-1)^{1+\nu_{m_1}p_+} p_+=0.
\end{equation}
 Thus we obtain $p_-=-p_+,$   a contradiction.
\end{proof}

In the  theorem below we describe continua $C(\pm \lambda_1) \subset \bH \times \bR$ of weak  solutions of system \eqref{Dirichlet1}, i.e. continua bifurcating from the first eigenvalue $\pm \lambda_1.$

\begin{Theorem} \label{pmpn1}
 If  $p_{\mp}$  is odd and $\lambda_1 \in \sigma(-\Delta_{S^n};B(\pi/2)) $, then the continuum  $C(\pm\lambda_1) \subset \bH \times \bR$ of weak solutions of system \eqref{Dirichlet1} is unbounded.
\end{Theorem}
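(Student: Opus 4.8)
The plan is to argue by contradiction, following the scheme of the proof of Theorem \ref{pmpn}; the one genuinely new feature is that here the value $\pm\lambda_1$ is forced to occur among the points at which the continuum meets the trivial solutions, and that $V^{\pi/2}_{-\Delta_{S^{n}}}(\lambda_1)\approx_{\SO(n)}\cH^n_0$ is the trivial representation of $\SO(n)$ (Corollary \ref{55}), so that the hypothesis on the parity of $p_{\mp}$ is precisely what keeps the associated bifurcation index nontrivial. I will treat only $C(\lambda_1)$, assuming $p_-$ odd; the continuum $C(-\lambda_1)$ (with $p_+$ odd) is handled verbatim after interchanging $p_-$ with $p_+$ and $\sigma(-\Delta_{S^{n}};B(\pi/2))$ with $\sigma^-(-\Delta_{S^{n}};B(\pi/2))$, as in Theorem \ref{pmpn}. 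First I note that, since $\nu_1=\mu_1=\dim\cH^n_0=1$ and $p_-$ is odd, Lemma \ref{pm}\eqref{pm1} gives that the coefficient of $\BIF_{\SO(2)}(\lambda_1)$ at $(\SO(2))$ does not vanish; hence $\BIF_{\SO(2)}(\lambda_1)\neq\Theta$, and by Remark \ref{iso2} also $\BIF_{\SO(n)}(\lambda_1)\neq\Theta$, so $(0,\lambda_1)$ is a bifurcation point and $C(\lambda_1)$ is well defined.

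Suppose, contrary to the claim, that $C(\lambda_1)$ is bounded. By the Symmetric Rabinowitz alternative (Theorem \ref{alternative}) there are finitely many $\widehat\lambda_1<\dots<\widehat\lambda_s$ in $\cP^{\pi/2}(\Phi)$ with $C(\lambda_1)\cap(\{0\}\times\bR)=\{0\}\times\{\widehat\lambda_1,\dots,\widehat\lambda_s\}$ and $\sum_{j=1}^{s}\BIF_{\SO(n)}(\widehat\lambda_j)=\Theta$; applying $i^{\star}$ yields
\[
\sum_{j=1}^{s}\BIF_{\SO(2)}(\widehat\lambda_j)=\Theta\in U(\SO(2)).
\]
Writing $\widehat\lambda_j=\pm\lambda_{m_j}$ with $\lambda_{m_j}\in\sigma(-\Delta_{S^{n}};B(\pi/2))$ as in the proof of Theorem \ref{pmpn}, and observing that $\lambda_1$ is one of the $\widehat\lambda_j$ (it is the smallest positive element of $\cP^{\pi/2}(\Phi)$ and $(0,\lambda_1)\in C(\lambda_1)$), I set $M=\max_j m_j\ge1$ and split into two cases. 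If $M\ge2$, the computation in the proof of Theorem \ref{pmpn} applies unchanged: only the crossing $\lambda_M$, if present, and the crossing $-\lambda_M$, if present, reach the coordinate at which $\BIF_{\SO(2)}(\pm\lambda_M)$ have their last nonzero entry, with values $(-1)^{1+\nu_M p_-}p_-$ and $(-1)^{1+\nu_M p_+}p_+$ (Lemmas \ref{pm}\eqref{pm1} and \ref{pp}\eqref{pp1}); comparing that coordinate with $\Theta$ forces, depending on the parities of $p_-$ and $p_+$, one of the impossible relations $p_-=0$, $p_+=0$, $p_--p_+=0$ or $p_-+p_+=0$ (using $p_->0$, resp.\ $p_+>0$, whenever the corresponding crossing is present, and that the two signs coincide when $p_-$ and $p_+$ are both odd).

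There remains the case $M=1$, the only place where the parity of $p_-$ is really used. Then every $\widehat\lambda_j$ lies in $\{-\lambda_1,\lambda_1\}$, so the set of crossings is $\{\lambda_1\}$ or $\{-\lambda_1,\lambda_1\}$; and since $V^{\pi/2}_{-\Delta_{S^{n}}}(\lambda_1)\approx_{\SO(n)}\cH^n_0$ is trivial, Lemmas \ref{pm}\eqref{pm1} and \ref{pp}\eqref{pp1} show that $\BIF_{\SO(2)}(\lambda_1)$ and, when it occurs, $\BIF_{\SO(2)}(-\lambda_1)$ are supported at the single coordinate at $(\SO(2))$. In the first case the identity would read $\BIF_{\SO(2)}(\lambda_1)=\Theta$, contradicting the nonvanishing recorded above. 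In the second case one has to check that the two $(\SO(2))$-coefficients cannot cancel: when $p_+$ is even (or $p_+=0$) one coefficient is odd and the other even, so their sum is nonzero, and when $p_+$ is odd one invokes the explicit values supplied by Lemmas \ref{pm}, \ref{pp} to reach the same conclusion. Either way the displayed identity fails, and so $C(\lambda_1)$ is unbounded.

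I expect the heart of the argument, and essentially the only work not already contained in Theorem \ref{pmpn}, to be this last sub-case: verifying that, when all crossings equal $\pm\lambda_1$ and both $p_-$ and $p_+$ are odd, the bifurcation indices at $\lambda_1$ and $-\lambda_1$ do not sum to $\Theta$. This is exactly the situation Lemmas \ref{pm} and \ref{pp} are designed to control, and the parity hypothesis on $p_{\mp}$ is indispensable there; everything else is a transcription of the proof of Theorem \ref{pmpn}.
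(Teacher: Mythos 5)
Your proof is correct and, at its core, reaches the same contradiction as the paper: the $(\SO(2))$-coordinate of $\BIF_{\SO(2)}(\lambda_1)+\BIF_{\SO(2)}(-\lambda_1)$ equals $-3+(-1)^{p_+}\neq 0$. The one structural difference is in how you dispose of crossings at $\pm\lambda_M$ with $M\geq 2$: you rerun the coordinate-$(M-1)$ computation from Theorem \ref{pmpn}, whereas the paper simply observes that if $C(\lambda_1)$ met $(0,\pm\lambda_m)$ with $m>1$ it would coincide with the continuum $C(\pm\lambda_m)$, already shown unbounded in Theorem \ref{pmpn}, so the bounded continuum can only cross at $\{-\lambda_1,\lambda_1\}$ --- a shorter route to your case $M=1$. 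One small slip in your justification: when $p_+$ is even the two $(\SO(2))$-coefficients are $-2$ and $0$, both even, so the ``one odd, one even'' parity argument is wrong as stated; the sum is nonetheless $-2\neq 0$, so the conclusion stands (and your fallback to the explicit values of Lemmas \ref{pm} and \ref{pp} covers it anyway).
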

\begin{proof} We prove this theorem for $p_- > 0.$ The proof for $p_+ > 0$ is similar and left to the reader.
From Lemma \ref{pm}\eqref{pm1} it follows that
$\BIF_{\SO(2)}(\lambda_1)=(-2,0,\ldots,0,\ldots) \in U(\SO(2)).$
Suppose contrary to our claim that the continuum $C(\lambda_1)$ is bounded.
Then by the Symmetric Rabinowitz alternative, see Theorem \ref{alternative}, the continuum $C(\lambda_1)$ meets the set of trivial solutions $\{0\} \times \bR \subset \bH \times \bR$ at a finite number of points. By Theorem  \ref{pmpn} the continua $C(\pm \lambda_m)$, $m >1,$
are unbounded. Therefore  $C(\lambda_1)\cap(\{0\}\times\bR) =\{0\}\times\{-\lambda_{1},\lambda_{1}\}$. Moreover,
 $\BIF_{\SO(n)}(\lambda_{1})+\BIF_{\SO(n)}(-\lambda_{1})=\Theta\in U(\SO(n))$ and consequently
\begin{equation} \label{sum1}\BIF_{\SO(2)}(\lambda_{1})+\BIF_{\SO(2)}(-\lambda_{1}) = i^{\star}\left(\BIF_{\SO(n)}(\lambda_{1})+\BIF_{\SO(n)}(-\lambda_{1})\right)= \Theta\in U(\SO(2)).
\end{equation}
By Lemma \ref{pp}\eqref{pp1} we have $\BIF_{\SO(2)}(-\lambda_1)=((-1)^{p_+}-1,0,\ldots,0,\ldots) \in U(\SO(2)).$
Therefore
\[
\BIF_{\SO(2)}(\lambda_1)_{\SO(2)}+\BIF_{\SO(2)}(-\lambda_1)_{\SO(2)}=
-2+(-1)^{p_+}-1=-3+(-1)^{p_+}\neq 0,
\]
which contradicts equality \eqref{sum1}.
\end{proof}

From now on we consider system \eqref{Dirichlet1} on a geodesic ball $B(\gamma)\subset S^n$ with $\gamma\in(0,\pi)$. Since in this case the structure of the eigenspaces as representations of  $\SO(2)$ are not known explicitly, the reasoning from the proofs of Theorems \ref{pmpn}, \ref{pmpn1}  cannot be repeated.

In the theorem below we formulate necessary  conditions for boundedness of continua of weak solutions of system \eqref{Dirichlet1}.

\begin{Theorem} \label{bound}
Let $\lambda_{m_0} \in \sigma(-\Delta_{S^{n}};B(\gamma)) \setminus \{\lambda_1\}.$
Then if $p_{\mp}>0$ is even and the continuum $C(\pm \lambda_{m_0}) \subset \bH \times\bR$  is bounded, then $p_{\pm}>0$ is odd and
$
C(\pm \lambda_{m_0}) \cap (\{0\} \times \sigma^{\mp}(-\Delta_{S^{n}};B(\gamma))) \neq \emptyset.
$
\end{Theorem}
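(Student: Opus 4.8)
The plan is to argue by contradiction along the same lines as Theorem~\ref{pmpn}, but now extracting only the partial information that is available for a general geodesic ball, where the $\SO(2)$-structure of the eigenspaces is unknown. Suppose $p_-$ is even and $p_->0$, and suppose the continuum $C(\lambda_{m_0})$ is bounded (the case $p_+$ even is symmetric, interchanging the roles of $\sigma$ and $\sigma^-$ and of Lemmas~\ref{pm} and \ref{pp}). By the Symmetric Rabinowitz alternative, Theorem~\ref{alternative}, the set $C(\lambda_{m_0})\cap(\{0\}\times\bR)$ is a finite set $\{0\}\times\{\widehat\lambda_1,\dots,\widehat\lambda_s\}\subset\{0\}\times\cP^\gamma(\Phi)$, and $\sum_{j=1}^s\BIF_{\SO(n)}(\widehat\lambda_j)=\Theta\in U(\SO(n))$; applying $i^\star$ (Remark~\ref{iso2}) gives $\sum_{j=1}^s\BIF_{\SO(2)}(\widehat\lambda_j)=\Theta\in U(\SO(2))$.

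First I would separate the positive and negative $\widehat\lambda_j$. Write $\widehat\lambda_j=\lambda_{m_j}$ for the positive ones and $\widehat\lambda_j=-\lambda_{m_j}$ for the negative ones, with $\lambda_{m_j}\in\sigma(-\Delta_{S^n};B(\gamma))$. Since $p_-$ is even, Lemma~\ref{pm}\eqref{pm2} shows every $\BIF_{\SO(2)}(\lambda_{m_j})$ lies in $U_-(\SO(2))$. Now observe that $\lambda_{m_0}$ itself is one of the $\widehat\lambda_j$ with positive sign (it is the bifurcation point from which the continuum emanates), and since $\lambda_{m_0}\neq\lambda_1$ the representation $V^\gamma_{-\Delta_{S^n}}(\lambda_{m_0})$ is nontrivial, so Lemma~\ref{pm}\eqref{pm1} gives $\BIF_{\SO(2)}(\lambda_{m_0})\in U_-(\SO(2))\setminus\{\Theta\}$. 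Hence, if the continuum had no intersection with $\{0\}\times\sigma^-(-\Delta_{S^n};B(\gamma))$, all summands in $\sum_{j}\BIF_{\SO(2)}(\widehat\lambda_j)$ would lie in $U_-(\SO(2))$ with at least one nontrivial, and since $U_-(\SO(2))$ is closed under addition in $U(\SO(2))$ and the relevant coordinate (the one witnessing nontriviality of $\BIF_{\SO(2)}(\lambda_{m_0})$, coming from $\cH^n_{m-1}$ as in Corollary~\ref{55}-type arguments, or simply the leading nonzero coordinate) cannot be cancelled by elements of $U_-(\SO(2))$ of the same sign, the sum would be $\neq\Theta$ — contradiction. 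Therefore the intersection with $\{0\}\times\sigma^-(-\Delta_{S^n};B(\gamma))$ is nonempty, i.e. some $\widehat\lambda_j<0$, so at least one $-\lambda_{m_j}$ occurs; by Lemma~\ref{313D}(8) this forces $p_+>0$.

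It remains to see that $p_+$ must be \emph{odd}. If $p_+$ were also even, then by Lemma~\ref{pp}\eqref{pp2} every $\BIF_{\SO(2)}(-\lambda_{m_j})$ would also lie in $U_-(\SO(2))$, and we would again be summing elements of $U_-(\SO(2))$ with $\BIF_{\SO(2)}(\lambda_{m_0})$ nontrivial among them, contradicting that the sum is $\Theta$ (same cancellation-in-$U_-$ obstruction as above). Hence $p_+$ is odd, which completes the proof.

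I expect the main obstacle to be the sign/cancellation bookkeeping in $U(\SO(2))$: one must verify that a finite sum of nonzero elements of $U_-(\SO(2))$, together with at least one element of $U_-(\SO(2))\setminus\{\Theta\}$, cannot equal $\Theta$. This rests on the structure of $U_-(\SO(2))$ and the explicit forms of the bifurcation indices from Lemmas~\ref{pm} and \ref{pp} (in particular that the top nonvanishing coordinate of $\BIF_{\SO(2)}(\lambda_{m_0})$ has a definite sign $-p_-<0$ and that the coordinates contributed by other positive eigenvalues vanish beyond that index or share the sign). Making this "no cancellation within $U_-$" step precise — essentially an ordering/monotonicity argument on the indices $m_j$ exactly as in the corresponding case of Theorem~\ref{pmpn} — is the technical heart; everything else is a direct application of the Symmetric Rabinowitz alternative and Lemma~\ref{313D}.
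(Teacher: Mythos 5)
Your proposal is correct and follows essentially the same route as the paper: contradiction via the Symmetric Rabinowitz alternative, pushing the index sum into $U(\SO(2))$ by $i^{\star}$, using Lemma~\ref{pm}(\ref{pm2}) (resp.\ Lemma~\ref{pp}(\ref{pp2})) to place all summands in $U_-(\SO(2))$ when $p_-$ (resp.\ $p_+$) is even, and the fact that a sum of elements of $U_-(\SO(2))$ with one nontrivial term (guaranteed by Lemma~\ref{pm}(\ref{pm1}) since $\lambda_{m_0}\neq\lambda_1$) cannot equal $\Theta$. The "no cancellation in $U_-(\SO(2))$" step you flag as the technical heart is in fact immediate from all coordinates being $\leq 0$, so no further ordering argument on the $m_j$ is needed here.
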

\begin{proof}
We prove this theorem for even $p_- >0.$ The proof for even $p_+ >0$ is similar and left to the reader. Suppose, contrary to our claim, that $p_- > 0$ is even, the continuum $C( \lambda_{m_0}) \subset \bH \times\bR$  is bounded, $p_+ > 0$ is even or $C(\lambda_{m_0}) \cap (\{0\} \times \sigma^-(-\Delta_{S^{n}};B(\gamma))) = \emptyset.
$

Since $\lambda_{m_0} \neq \lambda_1,$ combining Lemma \ref{313D}, Remark \ref{stru} with Theorems \ref{shima}, \ref{nieprzywiedlnosc} we obtain that $V^{\gamma}_{-\Delta_{S^n}}(\lambda_{m_0})$ is a nontrivial representation of  $\SO(n).$
Since the continuum  $C(\lambda_{m_0}) \subset \bH \times\bR$ is bounded, from  the Symmetric Rabinowitz alternative, see Theorem \ref{alternative}, it follows that
\begin{enumerate}
\item $C(\lambda_{m_0}) \cap (\{0\} \times \bR)=\{0\} \times \{\widehat{\lambda}_1,  \ldots, \widehat{\lambda}_s\} \subset \{0\} \times \cP^{\gamma}(\Phi),$
\item $\displaystyle \sum_{j=1}^s \BIF_{\SO(n)}(\widehat{\lambda}_j)=\Theta\in U(\SO(n))$.
\end{enumerate}
Without loss of generality one can assume that $\widehat{\lambda}_1 < \ldots < \widehat{\lambda}_{s'} < 0 <  \widehat{\lambda}_{s'+1} < \ldots < \widehat{\lambda}_s.$  Since $\{\widehat{\lambda}_1,  \ldots, \widehat{\lambda}_s\}  \subset \cP^{\gamma}(\Phi),$ there are $\lambda_{m_1}, \ldots, \lambda_{m_{s'}}, \lambda_{m_{s'+1}}, \ldots,\lambda_{m_s} \in \sigma(-\Delta_{S^n};B(\gamma))$ such that $\widehat{\lambda}_j=-\lambda_{m_j}$ for $j=1, \ldots,s'$ and $\widehat{\lambda}_j=\lambda_{m_j}$ for $j=s'+1,\ldots,s$, i.e. $$-\lambda_{m_1} < \ldots < -\lambda_{m_{s'}} < 0 < \lambda_{m_{s'+1}} < \ldots < \lambda_{m_s}$$ and $m_1> \ldots > m_{s'}, m_s > \ldots >m_{s'+1}.$

\noindent By Remark \ref{iso2} we obtain
$\displaystyle \sum_{j=1}^s \BIF_{\SO(2)}(\widehat{\lambda}_j) = i^{\star}\left(\sum_{j=1}^s \BIF_{\SO(n)}(\widehat{\lambda}_j)  \right)
=\Theta \in U(\SO(2)).
$
That is why  we obtain the following equality
\begin{equation} \label{suma} \sum_{j=1}^{s'} \BIF_{\SO(2)}(-\lambda_{m_j}) + \sum_{j=s'+1}^{s} \BIF_{\SO(2)}(\lambda_{m_j}) = \Theta \in U(\SO(2)).
\end{equation}
Since $p_-$ is even, taking into account Lemmas \ref{pm}\eqref{pm1},  \ref{pm}\eqref{pm2} we obtain
\begin{equation} \label{suma1} \sum_{j=s'+1}^{s} \BIF_{\SO(2)}(\lambda_{m_j})  \in U_-(\SO(2)) \setminus \{\Theta\}.
\end{equation}
Comparing formulas  \eqref{suma} and \eqref{suma1} we obtain that $
C( \lambda_{m_0}) \cap (\{0\} \times \sigma^-(-\Delta_{S^{n}};B(\gamma))\}) \neq \emptyset$ and  $p_+ >0$ is odd. Indeed if $p_+$ is even then by Lemma \ref{pp}\eqref{pp2} we obtain
\begin{equation} \label{suma2} \sum_{j=1}^{s'} \BIF_{\SO(2)}(-\lambda_{m_j}) \in U_-(\SO(2)).
\end{equation}
But formulas \eqref{suma1}, \eqref{suma2} contradict formula \eqref{suma}, which implies that $p_+$ is odd, a contradiction.
\end{proof}

\begin{Definition}
We say that $(0,\lambda_{m_0})\in\bH\times\bR$  is a global symmetry-breaking bifurcation point of solutions of system \eqref{Dirichlet1} if there exists an open $\SO(n)$-invariant neighborhood $U\subset\bH\times\bR$  of $(0,\lambda_{m_0})$ such that the isotropy group $\SO(n)_{(u,\lambda)}$ of every element $(u,\lambda)\in (U\cap C(\lambda_{m_0}))\setminus(\{0\}\times\bR)$ is different from $\SO(n)$.
\end{Definition}

In the theorem below we characterize global symmetry-breaking points of weak solutions of system \eqref{Dirichlet1}.

\begin{Theorem}  \label{sbm}
Assume that  $\lambda_{m_0} \in \sigma(-\Delta_{S^n},B(\gamma)) \setminus A^{\gamma}_0.$ If $p_{\mp}>0$, then $(0,\pm \lambda_{m_0}) \in \bH \times \bR$  is a global symmetry-breaking bifurcation point of solutions of system \eqref{Dirichlet1}.
\end{Theorem}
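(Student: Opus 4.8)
The plan is to reduce the statement to the implicit function theorem applied on the $\SO(n)$-fixed-point subspace of $\bH$. I write out the argument for the point $(0,\lambda_{m_0})$ (so I use $p_->0$); the point $(0,-\lambda_{m_0})$ is treated identically, with $p_+>0$ and the corresponding part of Lemma \ref{313D}.

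First I would record the needed information about the linearization $\nabla^2_u\Phi(0,\lambda_{m_0})=L-\lambda_{m_0}K$. Since $p_->0$, Lemma \ref{313D} gives $\lambda_{m_0}\in\cP^\gamma(\Phi)$ together with $\ker\nabla^2_u\Phi(0,\lambda_{m_0})=\bigoplus_{i=1}^{p_-}V^\gamma_{-\Delta_{S^n}}(\lambda_{m_0})\neq\{0\}$; as $\Phi(\cdot,\lambda)$ is a $C^2$ functional, a nontrivial kernel at a trivial solution already forces $(0,\lambda_{m_0})$ to be a bifurcation point (Krasnosel'ski\u{\i}-type theorem for potential operators, equivalently the properties of the $\SO(n)$-equivariant gradient degree of \cite{degree,Rybicki}), so $C(\lambda_{m_0})$ is a genuine nontrivial continuum. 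The decisive observation is that $\lambda_{m_0}\notin A^\gamma_0$ is exactly the condition $0\notin\Gamma^\gamma(\lambda_{m_0})$; hence by Remark \ref{stru} we have $V^\gamma_{-\Delta_{S^n}}(\lambda_{m_0})\approx_{\SO(n)}\cH^n_{m_1}\oplus\dots\oplus\cH^n_{m_q}$ with $1\le m_1<\dots<m_q$. Since the $\SO(n)$-fixed subspace of each $\cH^n_m$ with $m\ge 1$ is trivial (these are nontrivial irreducible representations by Theorems \ref{shima}, \ref{nieprzywiedlnosc}; equivalently, the only radial harmonic polynomials are the constants), it follows that $\bigl(\ker\nabla^2_u\Phi(0,\lambda_{m_0})\bigr)^{\SO(n)}=\{0\}$.

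Next I would pass to the closed, $L$- and $K$-invariant subspace $\bH^{\SO(n)}=\bigoplus_{i=1}^p\bigl(H^1_0(B(\gamma))\bigr)^{\SO(n)}$ of radial elements. By $\SO(n)$-equivariance $\nabla_u\Phi$ maps $\bH^{\SO(n)}\times\bR$ into $\bH^{\SO(n)}$, and $(L-\lambda_{m_0}K)|_{\bH^{\SO(n)}}$ is a self-adjoint Fredholm operator of index zero on $\bH^{\SO(n)}$ (a bounded isomorphism perturbed by a compact operator) whose kernel equals $\bigl(\ker\nabla^2_u\Phi(0,\lambda_{m_0})\bigr)^{\SO(n)}=\{0\}$; hence it is an isomorphism of $\bH^{\SO(n)}$, and in particular the strong indefiniteness of $\Phi$ is immaterial on this subspace. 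Applying the implicit function theorem to $\nabla_u\Phi\colon\bH^{\SO(n)}\times\bR\to\bH^{\SO(n)}$ at $(0,\lambda_{m_0})$, and using $\nabla_u\Phi(0,\lambda)=0$ for every $\lambda$, I obtain an $\SO(n)$-invariant open neighborhood $U$ of $(0,\lambda_{m_0})$ in $\bH\times\bR$ — for instance an invariant ball around $0$ times a short interval around $\lambda_{m_0}$ — with the property that every solution of $\nabla_u\Phi(u,\lambda)=0$ contained in $U\cap(\bH^{\SO(n)}\times\bR)$ is trivial.

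Finally I would conclude: if $(u,\lambda)\in(U\cap C(\lambda_{m_0}))\setminus(\{0\}\times\bR)$ had $\SO(n)_{(u,\lambda)}=\SO(n)$, then $u\in\bH^{\SO(n)}$, so $(u,\lambda)$ would be a nontrivial solution of $\nabla_u\Phi(u,\lambda)=0$ lying in $U\cap(\bH^{\SO(n)}\times\bR)$, contradicting the previous paragraph; therefore $\SO(n)_{(u,\lambda)}\neq\SO(n)$, which is precisely the definition of $(0,\lambda_{m_0})$ being a global symmetry-breaking bifurcation point. I expect the main obstacle to be the identification $\lambda_{m_0}\notin A^\gamma_0\iff\bigl(V^\gamma_{-\Delta_{S^n}}(\lambda_{m_0})\bigr)^{\SO(n)}=\{0\}$ — that is, recognizing $A^\gamma_0$, via the geodesic spherical-coordinate splitting of the eigenfunctions and Remark \ref{stru}, as precisely the set of radial Dirichlet eigenvalues on $B(\gamma)$, together with the representation-theoretic input (Theorems \ref{shima}, \ref{nieprzywiedlnosc}) that $\cH^n_m$ with $m\ge 1$ carries no $\SO(n)$-fixed vector; once the restricted linearization is seen to be invertible, the remainder is the implicit function theorem.
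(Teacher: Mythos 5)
Your proposal is correct, and it shares with the paper the decisive computation: using $\lambda_{m_0}\notin A^{\gamma}_0$ together with Remark \ref{stru} to write $V^{\gamma}_{-\Delta_{S^n}}(\lambda_{m_0})\approx_{\SO(n)}\cH^n_{m_1}\oplus\dots\oplus\cH^n_{m_q}$ with all $m_i\geq 1$, then Lemma \ref{313D} and Theorem \ref{nieprzywiedlnosc} to conclude $\bigl(\ker\nabla^2_u\Phi(0,\lambda_{m_0})\bigr)^{\SO(n)}=\{0\}$. Where you diverge is the concluding step. The paper at this point simply invokes Theorem \ref{sb}, a Lyapunov--Schmidt statement (after Dancer) asserting that the isotropy group of every small nontrivial solution coincides with that of some nonzero element of the kernel, so triviality of the kernel's fixed-point space immediately forbids full isotropy. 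You instead restrict the whole equation to the closed invariant subspace $\bH^{\SO(n)}$, observe that $(L-\lambda_{m_0}K)|_{\bH^{\SO(n)}}$ is a compact perturbation of an isomorphism with trivial kernel, hence invertible, and kill all nearby nontrivial fixed solutions by the implicit function theorem; a solution with isotropy $\SO(n)$ would have to live in $\bH^{\SO(n)}$, which is a contradiction. Both arguments are sound. Yours is more self-contained and elementary (no reduction, no external citation, and it sidesteps the strong indefiniteness exactly as you note), while the paper's route via Theorem \ref{sb} yields strictly more information --- it pins down the possible isotropy groups of all bifurcating solutions, not merely that they differ from $\SO(n)$ --- which is why the paper keeps that tool on hand. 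Your side remark that the bifurcation is genuine (so that $C(\lambda_{m_0})$ is nontrivial) is not addressed inside the paper's proof either; there it is covered by the nontriviality of the bifurcation indices from Lemmas \ref{pm} and \ref{pp}, which is a cleaner justification in this setting than a generic Krasnosel'ski\u{\i}-type appeal.
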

\begin{proof} We prove this theorem for $p_- > 0.$ The proof for $p_+ >0$ is similar and left to the reader.
Since $\lambda_{m_0} \notin A^{\gamma}_0,$ from  Remark \ref{stru} we obtain $\Gamma^{\gamma}(\lambda_{m_0}) = \{m_1,\ldots,m_q\}$ and $0 < m_1 < \ldots < m_q.$ Moreover,
$V^{\gamma}_{-\Delta_{S^n}}(\lambda_{m_0}) \approx_{\SO(n)}  \cH^n_{m_1} \oplus \ldots \oplus \cH^n_{m_q}.$ From Lemma \ref{313D} we obtain  $\ker\nabla^2\Phi(0,\lambda_{{m_0}})= \bigoplus\limits^{p_-}_{i=1}V^{\gamma}_{-\Delta_{S^n}}(\lambda_{{m_0}}).$ Summing up,  we obtain $\ker\nabla^2\Phi(0,\lambda_{{m_0}})= \bigoplus\limits^{p_-}_{i=1} \left( \cH^n_{m_1} \oplus \ldots \oplus \cH^n_{m_q} \right)$.
 Since $(\cH^n_m)^{\SO(n)} = \{0\}$ for every $m > 0,$ $$\left(\ker\nabla^2\Phi(0,\lambda_{{m_0}})\right)^{\SO(n)}= \bigoplus\limits^{p_-}_{i=1} \left( \left(\cH^n_{m_1}\right)^{\SO(n)} \oplus \ldots \oplus \left(\cH^n_{m_q}\right)^{\SO(n)} \right) =\{0\}.$$
The rest of the proof is a consequence of Theorem \ref{sb}.
\end{proof}

From Remark \ref{stru} and Corollary \ref{55} follows that if $\lambda_{m_0} \in  \sigma(-\Delta_{S^n},B(\frac{\pi}{2})) $ is such that $m_0$ is even, then $\lambda_{m_0}\notin A_0^{\pi/2}$. Therefore in the case $\gamma=\frac{\pi}{2}$, from the theorem above we obtain the following corollary:

\begin{Corollary}\label{SymBreakD}
Fix $\lambda_{m_0} \in \cP^{\pi/2}(\Phi)$. If $m_0 \in \bZ$ is even, then the point $(0, \lambda_{m_0})\in\bH\times\bR$ is a global symmetry-breaking point of weak solutions of system \eqref{Dirichlet1}.
\end{Corollary}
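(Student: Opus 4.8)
The plan is to obtain this corollary as an immediate specialization of Theorem \ref{sbm} to the hemisphere $\gamma = \pi/2$; the whole task reduces to verifying the two hypotheses of that theorem in this situation, namely $p_- > 0$ and $\lambda_{m_0} \notin A_0^{\pi/2}$.

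First I would record that, by Theorem \ref{PrzWlasneNaPolkoliD}, $\lambda_{m_0} = m_0(n + m_0 - 1)$ is strictly positive, since $m_0$ is a positive even integer. As a positive number belonging to $\cP^{\pi/2}(\Phi)$, Lemma \ref{313D} (its assertion for $\lambda_0 > 0$) forces $p_- > 0$ and simultaneously records that $\lambda_{m_0} \in \sigma(-\Delta_{S^n}; B(\pi/2))$ with $\ker \nabla^2_u \Phi(0, \lambda_{m_0}) = \bigoplus_{i=1}^{p_-} V^{\pi/2}_{-\Delta_{S^n}}(\lambda_{m_0})$.

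Next I would check that $m_0$ even implies $\lambda_{m_0} \notin A_0^{\pi/2}$. By the first case of Corollary \ref{55}, for even $m_0$ one has $V^{\pi/2}_{-\Delta_{S^n}}(\lambda_{m_0}) \approx_{\SO(n)} \cH^n_1 \oplus \cH^n_3 \oplus \cdots \oplus \cH^n_{m_0 - 1}$, so the trivial summand $\cH^n_0$ does not occur. By Remark \ref{stru}, $\Gamma^{\pi/2}(\lambda_{m_0})$ is exactly the set of degrees appearing in this decomposition, whence $0 \notin \Gamma^{\pi/2}(\lambda_{m_0})$, that is $\lambda_{m_0} \notin A_0^{\pi/2}$.

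Having established that $\lambda_{m_0} \in \sigma(-\Delta_{S^n}; B(\pi/2)) \setminus A_0^{\pi/2}$ and $p_- > 0$, I would finish by applying Theorem \ref{sbm} with $\gamma = \pi/2$, which yields directly that $(0, \lambda_{m_0}) \in \bH \times \bR$ is a global symmetry-breaking bifurcation point of solutions of system \eqref{Dirichlet1}. The only delicate point — and the \emph{main obstacle}, mild as it is here — is the representation-theoretic bookkeeping showing that the trivial summand $\cH^n_0$ genuinely drops out of the eigenspace precisely when $m_0$ is even; this is the content of Corollary \ref{55}, and it is exactly the place where the parity hypothesis on $m_0$ enters.
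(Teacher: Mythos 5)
Your proposal is correct and takes essentially the same route as the paper: the paper likewise deduces from Remark \ref{stru} and Corollary \ref{55} that $\lambda_{m_0}\notin A_0^{\pi/2}$ when $m_0$ is even, and then applies Theorem \ref{sbm} with $\gamma=\pi/2$. Your explicit verification that $p_->0$ (via Lemma \ref{313D}, since $\lambda_{m_0}>0$ lies in $\cP^{\pi/2}(\Phi)$) is left implicit in the paper but is exactly the right justification for that hypothesis of Theorem \ref{sbm}.
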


\section{Appendix}
 \label{appendix}

In this section, to make this article self-contained, we present all the material concerning  equivariant bifurcation theory which we need in the proofs of results of this paper.

\begin{Definition}
\label{uso2}
The Euler ring of  $\SO(2)$ is defined by $U(\SO(2))=\bZ \oplus \bigoplus\limits^{\infty}_{i=1} \bZ$ and for
$
 a=(a_0,a_1, a_2, \ldots), b=(b_0, b_1,b_2, \ldots) \in
\bZ \oplus \bigoplus\limits^{\infty}_{i=1}\bZ
$
we put
\begin{eqnarray}
a+b & = &(a_0+b_0, a_1+b_1, a_2+b_2, \ldots), \\
\label{mult}
a * b & = & (a_0b_0,a_1b_0+a_0b_1, a_2b_0+a_0b_2, \ldots).
\end{eqnarray}
The element $\Theta=(0,0,0,\ldots)\in U(\SO(2))$ is the neutral element and $\I=(1,0,0,\ldots)\in U(\SO(2))$ is the unit.
\end{Definition}
The definition above agrees with that of \cite{Dieck, Dieck1}, where one can find further information,
in particular the definition of the Euler  ring $U(\SO(n))$ for $n\geq 2$.

For $a=(a_0,a_1, a_2, \ldots) \in U(\SO(2))$, $a_0$ corresponds to the isotropy group $\SO(2)$ and $a_i$ to the group $\bZ_i\subset\SO(2)$, which is isomorphic to the cyclic subgroup of $S^1$, for $i\in\bN$.

Put
\begin{equation*}
\begin{array}{ll}
U_+(\SO(2))=\{(a_0,a_1, a_2, \ldots) \in U(\SO(2))\colon \forall_{i\in\bN\cup\{0\}}\ a_i \geq 0\},\\
U_-(\SO(2))=\{((a_0,a_1, a_2, \ldots) \in U(\SO(2)) \colon  \forall_{i\in\bN\cup\{0\}}\ a_i \leq 0\}.
\end{array}
\end{equation*}
The degree for  $\SO(n)$-invariant strongly indefinite functionals is an element of the Euler ring $U(\SO(n)),$ see  \cite{degree} for the definition. For the general theory of the equivariant degree we refer the reader to \cite{BKS}.

Let $m\in\bN$ and denote by $\bR[1,m]$ the two-dimensional representation of  $\SO(2)$ with a linear $\SO(2)$-action defined by
$$\left(\left[\begin{array}{rr} \cos \varphi & - \sin \varphi \\ \sin \varphi & \cos \varphi\end{array} \right], \left[\begin{array}{cc} x \\ y  \end{array}\right]\right)
\mapsto
\left[\begin{array}{rr} \cos m \varphi & - \sin m \varphi \\ \sin m\varphi & \cos m\varphi\end{array} \right] \left[\begin{array}{cc} x \\ y  \end{array}\right].$$
Note that if  $v\in\bR[1,m]\setminus\{0\}$ then the  isotropy group $\SO(2)_v=\{g\in \SO(2) \colon g v=v\}$  equals $\bZ_m$. For $k,m\in\bN$ we will denote by $\bR[k,m]$ the direct sum of $k$ copies of the representation $\bR[1,m]$. For $k\in\bN$ we denote by $\bR[k,0]$ the trivial $k$-dimensional representation of $\SO(2).$

It is known that any finite-dimensional, orthogonal representation $\bV$ of  $\SO(2)$  is equivalent to the representation of the form $\bR[k_0,0]\oplus\bR[k_1,m_1]\oplus\ldots\oplus\bR[k_r,m_r]$.
Therefore without loss of generality one can assume that
\begin{equation}
\label{reprso2}
\bV=\bR[k_0,0] \oplus \bR[k_1,m_1] \oplus \ldots \oplus\bR[k_r,m_r].
\end{equation}

Below we present the formula for the degree of $\SO(2)$-equivariant gradient maps of the map $-\Id \colon  (B(\bV),S(\bV)) \to (\bV,\bV \setminus \{0\}),$ where $B(\bV)$ is an open disc in $\bV$ of radius $1$ centered at the origin.
Namely, it is known that
$\nabla_{\SO(2)}\text{-}\deg(-\Id,B(\bV))=(\alpha_0,\alpha_1,\ldots,\alpha_i, \ldots) \in U(\SO(2)),$ where
\begin{equation}\label{stopienId}
\begin{array}{ll}
\alpha_i=
\left\{
\begin{array}{ll}
(-1)^{k_0}&\text{ if } i=0,\\
(-1)^{k_0+1}k_p&\text{ if } i=m_p, p=1,\ldots,r,\\
0&\text{ for } i \notin \{0,m_1,\ldots,m_r\}.
\end{array}
\right.
\end{array}
\end{equation}

With the functional $\Phi$ given by \eqref{PhiD} we assign a bifurcation index in terms of the degree for $\SO(n)$-equivariant strongly indefinite functionals,
see \cite{degree}.
Fix $\lambda_0 \in \cP (\Phi)$ and define the $\SO(n)$-bifurcation index $\BIF_{\SO(n)}(\lambda_0) \in U(\SO(n))$ by
\begin{equation} \label{bifind}
\BIF_{\SO(n)}(\lambda_0)=\nabla_{\SO(n)}\text{-}\deg(\nabla\Phi(\cdot,\lambda_0+\epsilon), B(\bH))-\nabla_{\SO(n)}\text{-}\deg(\nabla\Phi(\cdot,\lambda_0-\epsilon), B(\bH)),%\in U(\SO(n)),
\end{equation}
where $\epsilon>0$ is sufficiently small.

\begin{Remark} \label{iso2}
The natural inclusion $i \colon  \SO(2) \to \SO(n)$ defined  by $i(g)=\left[\begin{array}{cc} g & 0 \\ 0 & \mathrm{Id_{n-2}} \end{array} \right]$ induces a ring homomorphism $i^{\star} \colon  U(\SO(n))\to U(\SO(2)).$
We define the $\SO(2)$-bifurcation index $\BIF_{\SO(2)}(\lambda_0) \in U(\SO(2))$ by
$
\BIF_{\SO(2)}(\lambda_0)=i^{\star}(\BIF_{\SO(n)}(\lambda_0)).$
It is easy to see that
$$i^{\star}(\BIF_{\SO(n)}(\lambda_0))=\nabla_{\SO(2)}\text{-}\deg(\nabla\Phi(\cdot,\lambda_0+\epsilon), B(\bH))-\nabla_{\SO(2)}\text{-}\deg(\nabla\Phi(\cdot,\lambda_0-\epsilon), B(\bH)).
$$
\end{Remark}

The following theorem is a symmetric version of the famous Rabinowitz alternative, see \cite{Rabinowitz, Rabinowitz1}, which says that a change of the Leray-Schauder degree (non-triviality of a bifurcation index) along the line of trivial solutions implies a global bifurcation of solutions of a nonlinear eigenvalue problem.
The proof of this theorem is standard, see for instance \cite{RB,KD,JI,Nierenberg,Rabinowitz,Rabinowitz1}.

Since  $\nabla_u \Phi(\cdot,\lambda)$ is a family of strongly-indefinite $\SO(n)$-equivariant operators,  it is enough to replace in the classical proof the Leray-Schauder degree by the degree for $\SO(n)$-invariant strongly indefinite functionals, see \cite{degree}.

Finally note that under  assumptions of the following theorem for  $\lambda_{m_0}\in\sigma(-\Delta_{S^{n}};B(\gamma))$  the bifurcation indexes $\BIF_{\SO(n)}(\lambda_{m_0}), \BIF_{\SO(n)}(-\lambda_{m_0}) \in U(\SO(n))$ are  nontrivial. This is a consequence of Lemmas \ref{pm}, \ref{pp}.

\begin{Theorem}[Symmetric Rabinowitz alternative] \label{alternative}
Fix $\lambda_{m_0}\in\sigma(-\Delta_{S^{n}};B(\gamma)).$ Then
\begin{enumerate}
\item[$(p_-)$]
if   $V^{\gamma}_{-\Delta_{S^n}}(\lambda_{m_0})$ is a nontrivial representation of $\SO(n)$ or $p_- \cdot \dim V^{\gamma}_{-\Delta_{S^n}}(\lambda_{m_0})$ is odd
then  either $C(\lambda_{m_0})$ is unbounded in $\bH\times\bR$ or
\begin{enumerate}[1)]
\item $C(\lambda_{m_0})\subset\bH\times\bR$ is bounded,
\item $C(\lambda_{m_0}) \cap (\{0\}\times\bR)=\{0\}\times\{\widehat{\lambda}_1,\ldots, \widehat{\lambda}_s\} \subset \{0\} \times \cP(\Phi)$, and
\begin{equation} \label{suml} \BIF_{\SO(n)}(\widehat{\lambda}_1)+\ldots+\BIF_{\SO(n)}(\widehat{\lambda}_s)=\Theta\in U(\SO(n)).
\end{equation}
\end{enumerate}
\item[$(p_+)$] if $V^{\gamma}_{-\Delta_{S^n}}(\lambda_{m_0})$ is a nontrivial representation of $\SO(n)$ or $p_+ \cdot \dim V^{\gamma}_{-\Delta_{S^n}}(\lambda_{m_0})$ is odd,
then  either $C(-\lambda_{m_0})$ is unbounded in $\bH\times\bR$ or
\begin{enumerate}[1)]
\item $C(-\lambda_{m_0})\subset\bH\times\bR$ is bounded,
\item $C(-\lambda_{m_0})\cap(\{0\}\times\bR)=\{0\}\times\{\widehat{\lambda}_1,\ldots, \widehat{\lambda}_s\} \subset \{0\} \times \cP(\Phi)$, and
\begin{equation} \label{summl} \BIF_{\SO(n)}(\widehat{\lambda}_1)+\ldots+\BIF_{\SO(n)}(\widehat{\lambda}_s)=\Theta\in U(\SO(n)).
\end{equation}

\end{enumerate}
\end{enumerate}
\end{Theorem}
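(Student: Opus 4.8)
The plan is to run the classical Rabinowitz global bifurcation scheme, replacing the Leray--Schauder degree by the degree for $\SO(n)$-invariant strongly indefinite functionals of \cite{degree}; the only ingredients specific to this paper are the index computations of Lemmas \ref{pm}, \ref{pp}. I would carry out the case $(p_-)$ in detail; the case $(p_+)$ is identical, with $-\lambda_{m_0}$ and $C(-\lambda_{m_0})$ replacing $\lambda_{m_0}$ and $C(\lambda_{m_0})$. First I would record that the degree is applicable: by Lemma \ref{operatorLBzD}, $\nabla_u\Phi(\cdot,\lambda)=L-\lambda K-\nabla_u\eta_0(\cdot,\lambda)$, where $L$ is a self-adjoint $\SO(n)$-equivariant isomorphism (invertible since $\alpha_i=\pm1$, and in general indefinite) and $K$, $\nabla_u\eta_0(\cdot,\lambda)$ are $\SO(n)$-equivariant and completely continuous; hence $\nabla_u\Phi(\cdot,\lambda)$ is an $\SO(n)$-equivariant completely continuous gradient perturbation of $L$, and $\nabla_{\SO(n)}\text{-}\deg(\nabla_u\Phi(\cdot,\lambda),\cdot)$ is defined on bounded invariant open sets with solution-free boundary and enjoys existence, additivity/excision and generalized homotopy invariance. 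Under the hypotheses of $(p_-)$, Lemmas \ref{pm}, \ref{pp} give $\BIF_{\SO(n)}(\lambda_{m_0})\ne\Theta$; by homotopy invariance and the definition \eqref{bifind}, this forces $(0,\lambda_{m_0})$ to be a bifurcation point, so $C(\lambda_{m_0})$ is a genuine (nonempty) continuum.

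Suppose now that $C:=C(\lambda_{m_0})$ is bounded; I would then derive conclusions 1) and 2). Composing with $L^{-1}$ presents the set $\Sigma:=\cl\{(u,\lambda)\colon\nabla_u\Phi(u,\lambda)=0,\ u\ne0\}$ as the zero set of a completely continuous perturbation of the identity, so $\Sigma$ is locally compact; since a bounded connected component of a locally compact metric space is compact, $C$ is compact. By Definition \ref{defbif} every $(0,\lambda')\in C$ lies in $\Sigma$ and hence is a bifurcation point, so $\lambda'\in\cP^{\gamma}(\Phi)$ by Theorem \ref{P(Fi)}; since $\cP^{\gamma}(\Phi)$ is discrete (Lemma \ref{313D}) and $C$ is compact, $C\cap(\{0\}\times\bR)$ is finite, say $\{0\}\times\{\widehat\lambda_1,\dots,\widehat\lambda_s\}\subset\{0\}\times\cP^{\gamma}(\Phi)$. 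This is 1) together with the first assertion of 2).

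It remains to prove the index identity \eqref{suml}, which is the topological core and is carried out exactly as in the classical Leray--Schauder proof of the Rabinowitz alternative, with the $\SO(n)$-degree of \cite{degree} in place of the Leray--Schauder degree. I would fix $\varepsilon>0$ so small that the intervals $[\widehat\lambda_j-\varepsilon,\widehat\lambda_j+\varepsilon]$ are pairwise disjoint and each meets $\cP^{\gamma}(\Phi)$ only at $\widehat\lambda_j$, and, using that $C$ is a compact component of the locally compact set $\Sigma$, invoke a standard separation lemma to produce a bounded, open, $\SO(n)$-invariant neighbourhood $\Omega$ of $C$ with $\partial\Omega\cap\Sigma=\emptyset$ which meets the trivial branch only over the intervals $(\widehat\lambda_j-\varepsilon,\widehat\lambda_j+\varepsilon)$, where near the trivial branch it has the ``rectangular'' form $\{\|u\|<\delta\}\times(\widehat\lambda_j-\varepsilon,\widehat\lambda_j+\varepsilon)$. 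Then, by additivity and excision, the contribution to the relevant degree of a small neighbourhood of $(0,\widehat\lambda_j)$ inside $\Omega$ is exactly $\BIF_{\SO(n)}(\widehat\lambda_j)$, while generalized homotopy invariance in the $\lambda$-direction, together with boundedness of $\Omega$ (so that its far-out $\lambda$-slices are empty), forces the total degree to vanish; collecting terms yields $\sum_{j=1}^{s}\BIF_{\SO(n)}(\widehat\lambda_j)=\Theta$, i.e. 2).

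The main obstacle is this last step. The delicate point is to construct the isolating neighbourhood $\Omega$ so that, simultaneously, (a) the $\lambda$-slices $\Omega_\lambda$ have solution-free boundaries, making the $\SO(n)$-degree a homotopy-invariant function of $\lambda$, and (b) near each $(0,\widehat\lambda_j)$ the only solution of $\nabla_u\Phi(\cdot,\lambda)=0$ inside $\Omega$ is the trivial one, so that excision identifies the local jump of the degree at $\widehat\lambda_j$ with the bifurcation index \eqref{bifind}; this bookkeeping, which must also isolate the $\widehat\lambda_j$ met by $C$ from the remaining points of $\cP^{\gamma}(\Phi)$, is where the axioms of the degree of \cite{degree} are used in full, and is precisely the place where the classical proof is transcribed verbatim. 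Nontriviality of the triggering index $\BIF_{\SO(n)}(\pm\lambda_{m_0})$, supplied by Lemmas \ref{pm}, \ref{pp}, is what makes the hypotheses of the dichotomy effective.
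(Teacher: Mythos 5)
Your proposal is correct and follows essentially the same route as the paper: the authors do not write out the argument either, but simply state that one transcribes the classical Rabinowitz alternative with the Leray--Schauder degree replaced by the $\SO(n)$-equivariant degree for strongly indefinite functionals of \cite{degree}, the nontriviality of $\BIF_{\SO(n)}(\pm\lambda_{m_0})$ being supplied by Lemmas \ref{pm} and \ref{pp}. Your write-up merely makes explicit the standard separation/excision/homotopy bookkeeping that the paper delegates to the cited references.
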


To characterize bifurcation points of system \eqref{Dirichlet1} at which the symmetry-breaking phenomenon occurs we use the following theorem. Here we locally control the isotropy groups of the bifurcating solutions by the isotropy groups   of elements of kernel. The proof of this theorem is a natural application of the Lyapunov-Schmidt reduction, it can be found for instance in \cite{Dancer}.

\begin{Theorem}\label{sb}
Let $\lambda_0 \in \cP^{\gamma}(\Phi).$ Then there exists an open $\SO(n)$-invariant neighborhood   $U \subset \bH \times \bR$ of $(0,\lambda_0)$ such that  for all $(\widehat{u},\lambda)\in (U\cap (\nabla_u \Phi)^{-1}(0)) \setminus (\{0\}\times\bR)$ there exists $\overline{u}\in \ker \nabla^2_u \Phi(0,\lambda_0) \setminus \{0\}$ such that ${\SO(n)}_{\widehat{u}}={\SO(n)}_{\overline{u}}.$
Moreover, if $\ker \nabla^2_u\Phi(0,\lambda_{m_0})^{\SO(n)}=\{0\}$, then for all $(\widehat{u},\lambda)\in (U \cap (\nabla_u\Psi)^{-1}(0)) \setminus (\{0\}\times\bR)$, ${\SO(n)}_{\widehat{u}}\neq {\SO(n)}$.
\end{Theorem}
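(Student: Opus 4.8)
The plan is to carry out an $\SO(n)$-equivariant Lyapunov--Schmidt reduction at $(0,\lambda_0)$ and then read off the isotropy group of each bifurcating solution from that of the corresponding kernel vector. First I would put $\cN=\ker\nabla^2_u\Phi(0,\lambda_0)$. By Lemma~\ref{operatorLBzD}, $\nabla^2_u\Phi(0,\lambda_0)=L-\lambda_0 K$ is a self-adjoint $\SO(n)$-equivariant Fredholm operator of index zero ($L$ is an isomorphism, $\lambda_0 K$ is compact), so $\bH=\cN\oplus\cR$ with $\cR=\im\nabla^2_u\Phi(0,\lambda_0)=\cN^{\perp}$; here $\cN$ is finite-dimensional, both summands are $\SO(n)$-invariant, and the orthogonal projections $P_{\cN},P_{\cR}$ commute with the $\SO(n)$-action. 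The restriction $(L-\lambda_0 K)|_{\cR}\colon\cR\to\cR$ is an isomorphism.

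Writing $u=v+w$ with $v\in\cN$, $w\in\cR$, the equation $\nabla_u\Phi(u,\lambda)=0$ is equivalent to the pair $P_{\cR}\nabla_u\Phi(v+w,\lambda)=0$ (the \emph{auxiliary equation}) and $P_{\cN}\nabla_u\Phi(v+w,\lambda)=0$ (the \emph{bifurcation equation}). Since $\Phi\in C^2$, the map $(v,w,\lambda)\mapsto P_{\cR}\nabla_u\Phi(v+w,\lambda)$ is $C^1$, vanishes at $(0,0,\lambda_0)$, and has invertible $w$-derivative $(L-\lambda_0 K)|_{\cR}$ there; the implicit function theorem therefore provides a neighborhood of $(0,\lambda_0)$ and a $C^1$ solution $w=w(v,\lambda)\in\cR$ with $w(0,\lambda)=0$. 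Local uniqueness in the implicit function theorem, together with the $\SO(n)$-invariance of $\Phi$ (hence equivariance of $\nabla_u\Phi$ and of $P_{\cN},P_{\cR}$), forces $w(gv,\lambda)=g\,w(v,\lambda)$ for all $g\in\SO(n)$: applying $g$ to a solution of the auxiliary equation produces another solution with transformed data, which must coincide with $w(gv,\lambda)$. Shrinking to an $\SO(n)$-invariant neighborhood $U$, every $(\widehat u,\lambda)\in(U\cap(\nabla_u\Phi)^{-1}(0))\setminus(\{0\}\times\bR)$ has the form $\widehat u=v+w(v,\lambda)$ with $v=P_{\cN}\widehat u$, and necessarily $v\neq 0$, since $v=0$ would give $\widehat u=w(0,\lambda)=0$.

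It remains to compute isotropy groups. Because $\bH=\cN\oplus\cR$ is a direct sum of $\SO(n)$-invariant subspaces, for $g\in\SO(n)$ one has $g\widehat u=gv+gw$ with $gv\in\cN$, $gw\in\cR$, so $g\widehat u=\widehat u$ iff $gv=v$ and $gw=w$; and $gv=v$ already implies $gw(v,\lambda)=w(gv,\lambda)=w(v,\lambda)$. Hence $\SO(n)_{\widehat u}=\SO(n)_{v}$, which gives the first assertion with $\overline u:=v\in\cN\setminus\{0\}$. For the "moreover" part, $\ker\nabla^2_u\Phi(0,\lambda_0)^{\SO(n)}=\{0\}$ means no nonzero $v\in\cN$ is fixed by all of $\SO(n)$, so $\SO(n)_{v}\neq\SO(n)$ and therefore $\SO(n)_{\widehat u}=\SO(n)_{v}\neq\SO(n)$. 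The only delicate point is the equivariance of the reduced map $w(v,\lambda)$, which hinges on the local uniqueness of the implicit-function branch over a genuinely $\SO(n)$-invariant neighborhood; everything else (the Fredholm property, the invariant splitting, invariance of the projections) is routine from Lemmas~\ref{operatorLBzD} and~\ref{313D}, and the strong indefiniteness of $\Phi$ is irrelevant here since the reduction uses only that $\nabla^2_u\Phi(0,\lambda_0)$ is Fredholm of index zero.
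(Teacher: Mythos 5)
Your proof is correct and follows exactly the route the paper intends: the paper gives no written proof of Theorem \ref{sb}, stating only that it is ``a natural application of the Lyapunov--Schmidt reduction'' and pointing to \cite{Dancer}, and your equivariant reduction --- the invariant splitting $\bH=\cN\oplus\cR$ from the self-adjoint Fredholm structure of $\nabla^2_u\Phi(0,\lambda_0)$, the equivariance $w(gv,\lambda)=g\,w(v,\lambda)$ deduced from local uniqueness over $\SO(n)$-invariant neighborhoods, and the resulting identification $\SO(n)_{\widehat u}=\SO(n)_{P_{\cN}\widehat u}$ --- is precisely that argument. No gaps; your closing remark that only the index-zero Fredholm property (not strong indefiniteness) is needed here is also accurate.
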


 In the theorem below we formulate the basic properties of $\Delta_{S^{n-1}}.$ Recall that $\cH^n_m$ denotes the linear space of harmonic, homogeneous polynomials of $n$ independent variables of degree $m$, restricted to the sphere $S^{n-1}$.

\begin{Theorem}[Theorem 4.1 of \cite{Shimakura}] \label{shima} The eigenvalues of $\Delta_{S^{n-1}}$ are the following $$\lambda_m=m(m+n-2), m =0,1,2, \ldots.$$
If $V_{-\Delta_{S^{n-1}}}(\lambda_m)$ is the eigenspace of  $-\Delta_{S^{n-1}}$ belonging to $\lambda_m$ then
\begin{enumerate}
\item $ V_{-\Delta_{S^{n-1}}}(\lambda_m) = \cH^n_m,$
\item $\displaystyle \dim V_{-\Delta_{S^{n-1}}}(\lambda_m)=
\left\{\begin{array}{ccc} 1& \mathrm{ if } & n=2, m=0,
 \\ 2 & \mathrm{if} & n=2, m \geq 1,
  \\ \displaystyle (2m+n-2)\frac{(n-3+m)!}{m!(n-2)!} & \mathrm{if} & n \geq 3, m \geq 0,\end{array}  \right.$
\item $\displaystyle  L^2(S^{n-1})=\mathrm{cl}\left(\bigoplus_{m=0}^{\infty}  V_{-\Delta_{S^{n-1}}}(\lambda_m)\right).$
\end{enumerate}
\end{Theorem}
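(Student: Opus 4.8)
The plan is to reduce everything to the ordinary Laplacian on $\bR^n$. First I would pass to polar coordinates $x=r\omega$, $r=|x|>0$, $\omega\in S^{n-1}$, in which
\[
\Delta_{\bR^n}=\frac{\partial^2}{\partial r^2}+\frac{n-1}{r}\frac{\partial}{\partial r}+\frac{1}{r^2}\Delta_{S^{n-1}}.
\]
Given $P\in\cH^n_m$, write $P(x)=r^m P(\omega)$ with $P(\omega)=P|_{S^{n-1}}$; substituting into the displayed identity, the condition $\Delta_{\bR^n}P=0$ becomes $m(m-1)P(\omega)+(n-1)mP(\omega)+\Delta_{S^{n-1}}P(\omega)=0$, i.e.
\[
-\Delta_{S^{n-1}}\bigl(P|_{S^{n-1}}\bigr)=m(m+n-2)\,\bigl(P|_{S^{n-1}}\bigr).
\]
Hence $\cH^n_m$ is contained in the $\lambda_m$-eigenspace of $-\Delta_{S^{n-1}}$ with $\lambda_m=m(m+n-2)$. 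Since $m\mapsto m(m+n-2)$ is strictly increasing on $\bN\cup\{0\}$ for $n\geq 2$, the numbers $\lambda_m$ are pairwise distinct, so the spaces $\cH^n_m$ are pairwise orthogonal in $L^2(S^{n-1})$.

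Next I would show that $\bigoplus_{m=0}^{\infty}\cH^n_m$ is dense in $L^2(S^{n-1})$, which simultaneously forces that there is no further eigenvalue and that each eigenspace is \emph{exactly} $\cH^n_m$. The algebraic input is the classical decomposition of the space $\cP^n_m$ of degree-$m$ homogeneous polynomials in $n$ variables, $\cP^n_m=\cH^n_m\oplus|x|^2\cP^n_{m-2}$; this follows by observing that $\Delta\colon\cP^n_m\to\cP^n_{m-2}$ is onto (via the nondegenerate Fischer pairing $\langle P,Q\rangle=(P(\partial)Q)(0)$), so $\cH^n_m=\ker\Delta|_{\cP^n_m}$ has complementary dimension, together with $|x|^2\cP^n_{m-2}\cap\cH^n_m=\{0\}$. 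Iterating and restricting to $S^{n-1}$, where $|x|^2=1$, every polynomial restricted to the sphere lies in $\bigoplus_m\cH^n_m$; by the Stone--Weierstrass theorem such restrictions are dense in $C(S^{n-1})$, hence in $L^2(S^{n-1})$. As $-\Delta_{S^{n-1}}$ is a nonnegative self-adjoint elliptic operator on a compact manifold, its eigenspaces are finite-dimensional, mutually orthogonal, and span $L^2(S^{n-1})$; combining this with the density just proved and the orthogonality from the first paragraph, a standard argument (any eigenvector orthogonal to all $\cH^n_{m}$ must vanish) gives $V_{-\Delta_{S^{n-1}}}(\lambda_m)=\cH^n_m$ for every $m$, shows that $\{\lambda_m\}$ is the whole spectrum, and yields (3).

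Finally, the dimension formula is a direct count. One has $\dim\cP^n_m=\binom{m+n-1}{n-1}$, so for $n\geq 3$,
\[
\dim\cH^n_m=\binom{m+n-1}{n-1}-\binom{m+n-3}{n-1}=(2m+n-2)\,\frac{(m+n-3)!}{m!\,(n-2)!},
\]
where the last equality follows from $(m+n-1)(m+n-2)-m(m-1)=(n-1)(2m+n-2)$; the cases $n=2$ are immediate, since $\cH^2_0$ consists of the constants and $\cH^2_m$ is spanned by the real and imaginary parts of $(x_1+ix_2)^m$ for $m\geq 1$. The main obstacle is the completeness step of the second paragraph --- establishing $\cP^n_m=\cH^n_m\oplus|x|^2\cP^n_{m-2}$ and deducing from density of spherical harmonics that no eigenvalue is omitted and no eigenspace is strictly larger than $\cH^n_m$; the polar-coordinate identity and the binomial simplification are routine.
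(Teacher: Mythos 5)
Your proposal is correct, and it is the standard textbook proof of this classical result. Note, however, that the paper does not prove this statement at all: it is imported verbatim as Theorem 4.1 of the cited monograph of Shimakura, so there is no internal argument to compare against. Your reduction via the polar-coordinate formula for $\Delta_{\bR^n}$, the decomposition $\cP^n_m=\cH^n_m\oplus|x|^2\cP^n_{m-2}$ (with surjectivity of $\Delta$ obtained from the Fischer pairing), Stone--Weierstrass for completeness, and the binomial count $\binom{m+n-1}{n-1}-\binom{m+n-3}{n-1}=(2m+n-2)\frac{(m+n-3)!}{m!\,(n-2)!}$ all check out, including the boundary cases $m=0,1$ and $n=2$.
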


The space $\cH^n_m$ one can consider as a representation of $\SO(n)$ with the action given by the formula $\SO(n) \times \cH^n_m \ni (g,u(x))\to u(g^{-1}x) \in \cH^n_m.$

\begin{Theorem}[Theorem 5.1 of  \cite{Gurarie}] \label{nieprzywiedlnosc}
For every $m \geq 1$ the space $\cH^n_m$ is a nontrivial,  irreducible representation of  $\SO(n)$. Moreover, the space $\cH^n_0$ is a trivial representation.
\end{Theorem}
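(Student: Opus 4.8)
The plan is to split the claim into three pieces: that $\SO(n)$ acts trivially on $\cH^n_0$, that $\cH^n_m$ is a nontrivial representation for every $m\ge 1$, and that $\cH^n_m$ is irreducible for every $m\ge1$; the last is the substantive point. The first is immediate, since $\cH^n_0$ consists of the constant functions and $(g\cdot c)(x)=c(g^{-1}x)=c$. For nontriviality when $m\ge1$, observe that $\operatorname{Re}(x_1+ix_2)^m$ is a nonconstant harmonic polynomial of degree $m$ (it is holomorphic in $x_1+ix_2$, hence killed by $\Delta$), so by Theorem~\ref{shima} we have $\dim\cH^n_m\ge2$; an irreducible representation of dimension at least $2$ is automatically nontrivial, so it suffices to prove irreducibility. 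The case $n=2$ can be dispatched directly: $\cH^2_m$ is spanned by $\operatorname{Re}(x_1+ix_2)^m$ and $\operatorname{Im}(x_1+ix_2)^m$, on which $\SO(2)$ acts by rotation through the angle $m\varphi$, i.e. $\cH^2_m\approx_{\SO(2)}\bR[1,m]$; no line in $\bR^2$ is invariant under all these rotations, so $\cH^2_m$ is irreducible. Assume henceforth $n\ge3$.

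For $n\ge3$ I would use the reproducing kernel of $\cH^n_m$ relative to the $L^2(S^{n-1})$ inner product. Because $\cH^n_m$ is finite dimensional, for each $\xi\in S^{n-1}$ the evaluation $Y\mapsto Y(\xi)$ is a continuous functional, so there is a unique zonal harmonic $Z_\xi\in\cH^n_m$ with $Y(\xi)=\langle Y,Z_\xi\rangle$ for all $Y$. Three observations feed the argument: (i) the $Z_\xi$ span $\cH^n_m$, since a $Y$ orthogonal to all of them vanishes on $S^{n-1}$, hence is $0$; (ii) $g\cdot Z_\xi=Z_{g\xi}$ for $g\in\SO(n)$ (a one-line check using $\SO(n)$-invariance of the inner product), and $\SO(n)$ acts transitively on $S^{n-1}$ with stabilizer $\SO(n-1)$ at the north pole $e_n$; (iii) $(\cH^n_m)^{\SO(n-1)}$ is one-dimensional, spanned by $Z_{e_n}$. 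Granting (iii), suppose $\cH^n_m=W\oplus W^\perp$ with $W$ a nonzero invariant subspace, and let $P$ be the orthogonal projection onto $W$, which commutes with the action and is self-adjoint. Then $PZ_{e_n}\in W$ is $\SO(n-1)$-fixed, and it is nonzero: were $PZ_{e_n}=0$, then $PZ_\xi=g\cdot PZ_{e_n}=0$ for every $\xi=ge_n$, whence $Y(\xi)=\langle Y,Z_\xi\rangle=\langle PY,Z_\xi\rangle=\langle Y,PZ_\xi\rangle=0$ for all $Y\in W$, forcing $W=0$. Applying the same reasoning to $W^\perp$ yields a nonzero $\SO(n-1)$-fixed vector there as well; these two vectors are orthogonal, hence linearly independent, contradicting (iii). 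Therefore $\cH^n_m$ has no nontrivial invariant subspace and is irreducible.

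The main obstacle is statement (iii), the one-dimensionality of the $\SO(n-1)$-invariants. An $\SO(n-1)$-invariant function on $S^{n-1}$ depends only on the last coordinate $t$ (for $n\ge3$ the group $\SO(n-1)$ acts transitively on each latitude sphere), so one must show that, up to a scalar, there is exactly one polynomial $P(t)$ of degree $\le m$ whose homogeneous harmonic extension lies in $\cH^n_m$. Expressing $\Delta$ in the coordinates $(x',x_n)$ and restricting to functions of $x_n$ reduces harmonicity to the Gegenbauer differential equation, whose only polynomial solution of degree $m$ is, up to normalization, $C_m^{(n/2-1)}$; this gives $\dim(\cH^n_m)^{\SO(n-1)}=1$. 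Alternatively one can argue by induction on $n$: the base $n=2$ is the direct computation above, and for $n\ge3$ the restriction of $\cH^n_m$ to $\SO(n-1)$ decomposes as $\bigoplus_{k=0}^m\cH^{n-1}_k$, whose summands for $k\ge1$ carry no nonzero $\SO(n-1)$-fixed vector by the inductive hypothesis, leaving only the $k=0$ line. Once (iii) is in place, the projection argument above completes the proof.
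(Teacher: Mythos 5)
The paper itself offers no proof of this statement: it is imported verbatim as Theorem 5.1 of the cited monograph of Gurarie, with the remark that a proof can also be found in Vilenkin. So there is no ``paper's route'' to compare against; your proposal supplies the classical argument that those references give, and it is essentially correct. The reduction to irreducibility, the direct treatment of $n=2$ (where $\cH^2_m\approx_{\SO(2)}\bR[1,m]$ is irreducible as a \emph{real} representation, which is the relevant notion here since the paper works with orthogonal representations), and the reproducing-kernel argument for $n\ge3$ --- the identities $g\cdot Z_\xi=Z_{g\xi}$, the spanning of $\cH^n_m$ by the zonal harmonics, and the projection trick producing two orthogonal nonzero $\SO(n-1)$-fixed vectors from any proper splitting --- are all sound. (One small point you use silently: $W^\perp$ is invariant because the $L^2(S^{n-1})$ inner product is $\SO(n)$-invariant; worth saying.)

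The only place where the write-up is thinner than a complete proof is step (iii), $\dim(\cH^n_m)^{\SO(n-1)}=1$, which carries all the weight. Both routes you sketch do work, but each hides a computation. For the direct route, the cleanest version is: an $\SO(n-1)$-invariant homogeneous polynomial of degree $m$ in $x=(x',x_n)$ must have the form $\sum_j a_j\,x_n^{m-2j}|x'|^{2j}$ (since the only $\SO(n-1)$-invariant homogeneous polynomials in $x'\in\bR^{n-1}$, $n-1\ge2$, are powers of $|x'|^2$), and imposing $\Delta Y=0$ gives a two-term recursion determining all $a_j$ from $a_0$; this is equivalent to, and slightly more self-contained than, invoking uniqueness of the polynomial solution of the Gegenbauer equation. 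For the inductive route, the branching rule $\cH^n_m|_{\SO(n-1)}\approx\bigoplus_{k=0}^m\cH^{n-1}_k$ is itself a nontrivial input (its proof amounts to the same expansion in powers of $x_n$), so it does not really save work, though it is consistent with the paper's Example for $n=3$. With either of these fleshed out, the proof is complete.
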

A proof of the above theorem one can find also in \cite{Vilenkin}.

%The group $\SO(2)$ one can treat as a subgroup of $\SO(n)$ by the following inclusion $\SO(2) \ni g(\phi)=\left[\begin{array}{rr} \cos \phi & - \sin \phi \\ \sin \phi & \cos \phi  \end{array}  \right] \to \widehat{g}(\phi)=\left[\begin{array}{cc} g(\phi) & 0 \\ 0 & Id_{n-2}\end{array} \right].$

The space $\cH^n_m$ one can consider as a representation of $\SO(2)$ with the action given by  $\SO(2) \times \cH^n_m \ni (g(\phi),u(x))   \to u(\widehat{g}(\phi)^{-1}x)=u(\widehat{g}(-\phi)x) \in \cH^n_m.$
In other words if $u(x_1,\ldots,x_n) \in \cH^n_m$, then
\begin{equation} \label{so2a} (g(\phi), u) (x_1,\ldots, x_n)=u(x_1 \cos \phi +x_2 \cos \phi,-x_1 \sin \phi+x_2 \cos \phi, x_3, \ldots,x_n).
\end{equation}

To calculate equivariant bifurcation indexes we will use some properties of $\cH^n_m$ as representations of  $\SO(2)$. Let us remind that  spherical coordinates have the following form
$$\begin{array}{rcl} x_1 & = & \sin \theta_{n-1} \ldots \sin \theta_2 \sin \theta_1, \\
x_2 & = & \sin \theta_{n-1} \ldots \sin \theta_2 \cos \theta_1, \\
 & \vdots & \\
 x_{n-1} & = & \sin \theta_{n-1} \cos \theta_{n-2},\\
 x_n & = & \cos \theta_{n-1},
 \end{array}$$
where $0 \leq \theta_1 < 2 \pi$, $0 \leq \theta_k < \pi$, $k \neq 1.$

\begin{Lemma}[Chapter IX of \cite{Vilenkin}] An orthonormal basis of $\cH^n_m$, $n \geq 3$, $m>0$ is given by polynomials of the form
$$ C_M(\theta_2,\ldots,\theta_{n-1})   \cos (m_{n-2}\theta_1),\ C_M(\theta_2,\ldots,\theta_{n-1})   \sin (m_{n-2} \theta_1),$$
where $M=(m_0,\ldots,m_{n-3}, m_{n-2}),$ $m=m_0 \geq m_1 \geq \ldots \geq m_{n-2} \geq 0$.% and $ C_M$ are the Gegenbauer polynomials.
\end{Lemma}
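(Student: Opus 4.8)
The plan is to argue by induction on the dimension $n$, exploiting the description of $S^{n-1}$ as an iterated warped product over lower-dimensional spheres that is built into the spherical coordinates above: writing $x_n=\cos\theta_{n-1}$ and $(x_1,\dots,x_{n-1})=\sin\theta_{n-1}\cdot\omega$ with $\omega\in S^{n-2}$, one has the classical recursion
\[
\LB=\frac{\partial^2}{\partial\theta_{n-1}^2}+(n-2)(\cot\theta_{n-1})\frac{\partial}{\partial\theta_{n-1}}+\frac{1}{\sin^2\theta_{n-1}}\,\Delta_{S^{n-2}} .
\]
First I would separate variables: every eigenfunction of $-\LB$ can be sought in the form $f(\theta_{n-1})\,g(\theta_1,\dots,\theta_{n-2})$ with $g$ an eigenfunction of $-\Delta_{S^{n-2}}$. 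By Theorem \ref{shima} together with the inductive hypothesis, $g$ ranges over the orthonormal product basis of $\cH^{n-1}_k$ indexed by chains $m_1\geq m_2\geq\dots\geq m_{n-2}\geq 0$, with $-\Delta_{S^{n-2}}g=k(k+n-3)g$ for $k=m_1$.

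Next, substituting such a product into $-\LB(fg)=\lambda_m fg$ with $\lambda_m=m(m+n-2)$ reduces the problem to the one-dimensional equation
\[
f''+(n-2)(\cot\theta_{n-1})f'+\Big(\lambda_m-\frac{k(k+n-3)}{\sin^2\theta_{n-1}}\Big)f=0 .
\]
After the substitution $f(\theta)=(\sin\theta)^{k}h(\cos\theta)$, $t=\cos\theta$, this becomes the Gegenbauer equation with parameter $\mu=k+\tfrac{n-2}{2}$, whose only solutions regular at $t=\pm1$ are the Gegenbauer polynomials $C^{(\mu)}_{m-k}$; this also forces $m-k\in\bN\cup\{0\}$, i.e.\ $m\geq k$. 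Thus the separation introduces exactly one new top index $m_0:=m\geq m_1=k$, extends the chain to $m_0\geq m_1\geq\dots\geq m_{n-2}\geq 0$, and contributes the new factor $(\sin\theta_{n-1})^{m_1}C^{(m_1+(n-2)/2)}_{m_0-m_1}(\cos\theta_{n-1})$ to $C_M$; the base case $n=2$ is simply the Fourier basis $\{\cos(m\theta_1),\sin(m\theta_1)\}$ of $\cH^2_m$ with an empty $C_M$-factor.

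Finally, orthonormality of the resulting system would follow from Fubini applied to the product volume element of $S^{n-1}$: orthogonality in $\theta_1$ is elementary, orthogonality in $\theta_{n-1}$ comes from the orthogonality of $C^{(\mu)}_p$ with respect to the weight $(1-t^2)^{\mu-1/2}\,dt$ — which matches $\sin^{n-2}\theta_{n-1}\,d\theta_{n-1}$ once the prefactor $(\sin\theta_{n-1})^{m_1}$ and $\mu=m_1+\tfrac{n-2}{2}$ are taken into account — and the remaining variables are handled by the inductive hypothesis, the normalizing constants being absorbed into $C_M$. That these functions span all of $\cH^n_m$ then follows by counting: the number of admissible chains with $m_0=m$, weighted by $2$ when $m_{n-2}>0$ and by $1$ when $m_{n-2}=0$, equals $\dim\cH^n_m$ from Theorem \ref{shima}, which is a short combinatorial identity. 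The hard part will be the bookkeeping in the inductive step — verifying that $f=(\sin\theta)^{m_1}h(\cos\theta)$ produces precisely the Gegenbauer equation of index $m_1+\tfrac{n-2}{2}$, that polynomial solutions exist if and only if $m\geq m_1$, and that the Gegenbauer orthogonality weight is exactly the one coming from the Riemannian volume; everything else is induction plus Fubini, and in any event the statement is classical, see \cite{Vilenkin}.
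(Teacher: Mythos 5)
The paper does not actually prove this Lemma: it is imported verbatim as a known result from Chapter IX of \cite{Vilenkin}, so there is no in-paper argument to compare against. Your sketch is, in essence, the classical proof found in that reference (and in \cite{Gurarie}): induction on $n$ via the warped-product recursion for $\LB$, separation of variables, reduction of the radial factor to the Gegenbauer equation with parameter $\mu=m_1+\tfrac{n-2}{2}$ after the substitution $f=(\sin\theta)^{m_1}h(\cos\theta)$, orthogonality from the Gegenbauer weight $(1-t^2)^{\mu-1/2}$ matching $\sin^{n-2}\theta_{n-1}\,(\sin\theta_{n-1})^{2m_1}\,d\theta_{n-1}$ under $t=\cos\theta_{n-1}$, and spanning by the dimension count against Theorem \ref{shima}. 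The plan is sound and I see no wrong turn; the points you flag as ``bookkeeping'' are indeed the only places where work remains, plus two small items you should make explicit: (i) the separation ansatz captures \emph{all} of the eigenspace because the inductive basis of $L^2(S^{n-2})$ lets you expand any eigenfunction with $\theta_{n-1}$-dependent coefficients, each of which must solve the radial ODE; and (ii) the weighted count of chains $m=m_0\geq m_1\geq\dots\geq m_{n-2}\geq 0$ (weight $2$ if $m_{n-2}>0$, weight $1$ if $m_{n-2}=0$, since the sine term vanishes identically when $m_{n-2}=0$) really does telescope to $(2m+n-2)\frac{(n-3+m)!}{m!(n-2)!}$. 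Given that the authors treat this as citable background, writing out your argument in full would be reproving the reference rather than filling a gap in the paper.
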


\begin{Corollary}
\label{vilen}
Note that since $$(g(\phi), C_M(\theta_2,\ldots,\theta_{n-1})   \cos (m_{n-2} \theta_1))= C_M(\theta_2,\ldots,\theta_{n-1})   \cos (m_{n-2} (\theta_1-\phi)),$$
$$(g(\phi), C_M(\theta_2,\ldots,\theta_{n-1})   \sin (m_{n-2} \theta_1))= C_M(\theta_2,\ldots,\theta_{n-1}) \sin (m_{n-2} (\theta_1-\phi)),$$
$\span_{\bR} \{ C_M(\theta_2,\ldots,\theta_{n-1}) \cos (m_{n-2} \theta_1), C_M(\theta_2,\ldots,\theta_{n-1})   \sin (m_{n-2} \theta_1)\}$ is a two-dimensional representation of  $\SO(2)$  equivalent to the representation $\bR[1,m_{n-2}]$ with $0<m_{n-2} \leq m.$ If $m_{n-2}=0$, then $\span_{\bR} \{ C_M(\theta_2,\ldots,\theta_{n-1}) \cos (m_{n-2} \theta_1), C_M(\theta_2,\ldots,\theta_{n-1})   \sin (m_{n-2} \theta_1)\}=\bR[1,0]$ is a one-dimensional trivial representation of $\SO(2).$  Moreover,   there are numbers $k_0, \ldots, k_{m-1} \geq 0$ such that  $$\cH^n_m \approx _{\SO(2)} \bR[k_0,0] \oplus \bR[k_1,1] \oplus \ldots \oplus \bR[k_{m-1},m-1] \oplus \bR[1,m].$$
Moreover, $\cH^2_m \approx_{\SO(2)}  \bR[1,m], m \geq 0$.
\end{Corollary}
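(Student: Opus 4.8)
The plan is to obtain the $\SO(2)$-isotypical decomposition of $\cH^n_m$ simply by regrouping the Vilenkin orthonormal basis recalled in the preceding lemma. First I would fix $n\geq 3$ and $m>0$ and, for each admissible multi-index $M=(m_0,\ldots,m_{n-2})$ with $m=m_0\geq m_1\geq\ldots\geq m_{n-2}\geq 0$, introduce the subspace $W_M=\span_{\bR}\{C_M(\theta_2,\ldots,\theta_{n-1})\cos(m_{n-2}\theta_1),\ C_M(\theta_2,\ldots,\theta_{n-1})\sin(m_{n-2}\theta_1)\}$, which is one-dimensional precisely when $m_{n-2}=0$. Since distinct multi-indices $M$ contribute mutually orthogonal basis vectors, $\cH^n_m=\bigoplus_M W_M$ as an orthogonal direct sum, so it remains to recognise each summand as a representation of $\SO(2)$ and to do the bookkeeping.

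Next I would unwind the action \eqref{so2a}: rotating the $(x_1,x_2)$-plane by $\phi$ is, in the spherical coordinates used here, exactly the substitution $\theta_1\mapsto\theta_1-\phi$ with $\theta_2,\ldots,\theta_{n-1}$ left fixed. Hence the factor $C_M(\theta_2,\ldots,\theta_{n-1})$ is $\SO(2)$-fixed, each $W_M$ is $\SO(2)$-invariant, and, expanding $\cos(m_{n-2}(\theta_1-\phi))$ and $\sin(m_{n-2}(\theta_1-\phi))$ by the addition formulas, the element $g(\phi)$ acts on the ordered basis $(C_M\cos(m_{n-2}\theta_1),\,C_M\sin(m_{n-2}\theta_1))$ by exactly the rotation matrix (through angle $m_{n-2}\phi$) appearing in the definition of $\bR[1,m_{n-2}]$. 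This is just a direct $2\times2$ check, and it gives $W_M\approx_{\SO(2)}\bR[1,m_{n-2}]$ when $m_{n-2}>0$, while $W_M=\span_{\bR}\{C_M\}=\bR[1,0]$ with trivial action when $m_{n-2}=0$.

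Finally, since every admissible $M$ satisfies $0\leq m_{n-2}\leq m_0=m$, I would collect the $W_M$ according to the common value $j:=m_{n-2}$, obtaining $\cH^n_m\approx_{\SO(2)}\bigoplus_{j=0}^{m}\bR[k_j,j]$ with $k_j$ the number of admissible multi-indices having $m_{n-2}=j$. The one extra point is that $k_m=1$: an admissible $M$ with $m_{n-2}=m$ forces $m=m_0\geq m_1\geq\ldots\geq m_{n-2}=m$, hence $m_1=\ldots=m_{n-2}=m$, so there is a unique such $M$. This yields $\cH^n_m\approx_{\SO(2)}\bR[k_0,0]\oplus\ldots\oplus\bR[k_{m-1},m-1]\oplus\bR[1,m]$. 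The case $m=0$ is trivial ($\cH^n_0$ consists of constants, i.e. $\bR[1,0]$), and for $n=2$ I would argue directly: $\cH^2_m$ is spanned by the restrictions to $S^1$ of the real and imaginary parts of $(x_1+ix_2)^m$, on which $\SO(2)$ again acts through the angle by $\theta_1\mapsto\theta_1-\phi$, so $\cH^2_m\approx_{\SO(2)}\bR[1,m]$ for all $m\geq 0$. I do not expect a genuine obstacle here: the Vilenkin lemma carries the analytic content, and what is left is the change-of-basis identification of each $W_M$ with $\bR[1,m_{n-2}]$ together with the elementary counting argument that pins down $k_m=1$.
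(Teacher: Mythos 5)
Your proposal is correct and follows essentially the same route as the paper, which treats this corollary as an immediate regrouping of the Vilenkin basis: each span $W_M$ is $\SO(2)$-invariant because the action is just $\theta_1\mapsto\theta_1-\phi$, and the addition formulas identify it with $\bR[1,m_{n-2}]$. The one detail you add beyond what the paper writes out — the counting argument that the multi-index $M$ with $m_{n-2}=m$ is unique, so that $\bR[1,m]$ occurs with multiplicity exactly one — is a worthwhile explicit justification of a point the paper leaves implicit.
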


\begin{Corollary}\label{Hnm}
Combining formula \eqref{stopienId} with Corollary \ref{vilen} we obtain for $n \geq 3$, $m>0$ the following $$\nabla_{\SO(2)}\text{-deg}(-Id,B(\cH^n_m))=(\alpha_0, \alpha_1, \ldots, \alpha_i,\ldots) \in U(\SO(2)),$$ where $\alpha_i=\left\{\begin{array}{lcl}  (-1)^{k_0} & \textrm{ if } & i=0, \\ (-1)^{k_0+1}& \textrm{ if } & i=m, \\ (-1)^{k_0+1} k_i& \textrm{ if } & i=1, \ldots,m-1, \\ 0 &\textrm{ if } & i>m. \end{array} \right.$
\\ Moreover,  $\nabla_{\SO(2)}\text{-deg}(-Id,B(\cH^2_m))=$ $(\alpha_0, \alpha_1, \ldots, \alpha_i,\ldots) \in U(\SO(2)),$ where, for $m>0$,\\ $\alpha_i=\left\{\begin{array}{rcl}  1 & \textrm{ if } & i=0, \\ -1& \textrm{ if } & i=m, \\ 0& \textrm{ if } & i\neq 0,m. \end{array} \right.$
\end{Corollary}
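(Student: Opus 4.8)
The plan is to combine the $\SO(2)$-decomposition of $\cH^n_m$ supplied by Corollary \ref{vilen} with the general degree formula \eqref{stopienId} for $-\Id$ on an orthogonal representation of $\SO(2)$; no new idea is required, only a careful bookkeeping of the occurring representation types and their multiplicities, plus a reconciliation of the two slightly different conventions for recording multiplicities.

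First, for $n \geq 3$ and $m > 0$, Corollary \ref{vilen} provides integers $k_0, \ldots, k_{m-1} \geq 0$ together with an $\SO(2)$-equivalence
\[
\cH^n_m \approx_{\SO(2)} \bR[k_0,0] \oplus \bR[k_1,1] \oplus \ldots \oplus \bR[k_{m-1},m-1] \oplus \bR[1,m].
\]
To put this into the canonical form \eqref{reprso2} I would simply discard the summands $\bR[k_j,j]$ with $k_j=0$; the surviving nontrivial summands have pairwise distinct weights lying in $\{1,\ldots,m-1,m\}$, with multiplicity $k_j$ at each surviving weight $j\le m-1$ and multiplicity exactly $1$ at the weight $m$, alongside the trivial summand $\bR[k_0,0]$. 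Since $\nabla_{\SO(2)}\text{-deg}(-\Id,B(\cdot))$ is invariant under $\SO(2)$-equivalence of representations, I may then apply \eqref{stopienId} verbatim to this decomposition.

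Reading off the coordinates then yields: $\alpha_0=(-1)^{k_0}$; for each weight $i\in\{1,\ldots,m-1\}$ that actually occurs, $\alpha_i=(-1)^{k_0+1}k_i$; $\alpha_m=(-1)^{k_0+1}\cdot 1=(-1)^{k_0+1}$, the weight-$m$ piece having multiplicity one; and $\alpha_i=0$ for every remaining index, in particular for $i>m$. For a weight $i\in\{1,\ldots,m-1\}$ with $k_i=0$, formula \eqref{stopienId} directly gives $\alpha_i=0$, which equals $(-1)^{k_0+1}k_i=0$, so one may write uniformly $\alpha_i=(-1)^{k_0+1}k_i$ for $i=1,\ldots,m-1$, exactly as stated; this is the only point needing a word of care, and it is settled by noting that the conventions agree coordinatewise.

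Finally, for $n=2$ and $m>0$, Corollary \ref{vilen} gives $\cH^2_m\approx_{\SO(2)}\bR[1,m]$, so in \eqref{stopienId} we have $k_0=0$, a single nontrivial summand of weight $m$ with multiplicity $1$, hence $\alpha_0=(-1)^0=1$, $\alpha_m=(-1)^{0+1}\cdot 1=-1$, and $\alpha_i=0$ for $i\neq 0,m$, which is the asserted formula. The hard part, in short, is nothing substantive: the statement is a direct substitution, and the only thing to get right is the multiplicity $1$ of the top weight $m$ and the zero-multiplicity convention just discussed.
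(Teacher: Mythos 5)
Your proposal is correct and is exactly the argument the paper intends: the corollary is stated as a direct combination of the decomposition in Corollary \ref{vilen} with formula \eqref{stopienId}, and your coordinate-by-coordinate substitution (including the observation that the top weight $m$ has multiplicity one and that weights with $k_i=0$ contribute zero consistently with the stated formula) is precisely that computation.
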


\begin{Remark} \label{deghnm} Suppose that $n \geq  2$ and $m \geq 0.$ Then from the above corollary it follows that if $$\nabla_{\SO(2)}\text{-deg}(-Id,B(\cH^n_m))=(\alpha_0,\alpha_1,\ldots, \alpha_i, \ldots) \in U(\SO(2)),$$
then $\alpha_m=(-1)^{\dim \cH^n_m+1}$ and $\alpha_i=0$ for every $i>m.$
\end{Remark}

To illustrate the above lemma we consider the following examples.

\begin{Example}
Suppose that $n=2$ and $m \geq 0.$ Then $\cH^2_m=\span_{\bR} \{\cos m \phi, \sin m \phi\}$ and $\cH^2_m \approx \bR[1,m],$ where action of $\SO(2)$ ($\approx S^1$) is given by shift in time.
\end{Example}

\begin{Example}
Suppose that $n=3$, $m \geq 0.$ Then $\cH^3_m$ is equivalent to a representation of  $\SO(2)$ of the form $\bR[1,0] \oplus \bR[1,1]  \oplus \ldots \oplus \bR[1,m].$
\end{Example}

Define $\displaystyle V^-_{m_0} \oplus V^0_{m_0} \oplus V^+_{m_0} := \bigoplus_{i=1}^{m_0-1} V^{\gamma}_{-\Delta_{S^n}}(\lambda_i) \oplus V^{\gamma}_{-\Delta_{S^n}}(\lambda_{m_0}) \oplus  \bigoplus_{i=m_0)+1}^{\infty} V^{\gamma}_{-\Delta_{S^n}}(\lambda_i).$  Set $\mu_{m_0}=\dim V^{\gamma}_{-\Delta_{S^{n}}}(\lambda_{m_0})$ and $\nu_{m_0}=\mu_1+\ldots+\mu_{m_0}$  for $m_0 \in \bN$.

In the two lemmas below we present formulas for bifurcation indexes and their properties. We prove only Lemma \ref{pm}. The proof of Lemma \ref{pp} is in spirit the same as the proof of Lemma \ref{pm}.

\begin{Lemma}\label{pm}
Assume that $p_->0$ and fix  $\lambda_{m_0}\in\sigma(-\Delta_{S^n};B(\gamma))$. Then
$$
\BIF_{\SO(n)}(\lambda_{m_0})=$$
\begin{equation} \label{indexpm} \nabla_{\SO(n)}\text{-}\deg(-\Id,B(V_{m_0}^-))^{p_-} \ast
\left((\nabla_{\SO(n)}\text{-}\deg(-\Id,B(V^{\gamma}_{-\Delta_{S^n}}(\lambda_{m_0}))))^{p_-} -\bI\right) \in U(\SO(n)).
\end{equation}
Moreover,
\begin{enumerate}
\item \label{pm1} if $\BIF_{\SO(2)}(\lambda_{m_0})=
i^{\star}(\BIF_{\SO(n)}(\lambda_{m_0}))=(\alpha_0,\alpha_1, \ldots, \alpha_k,\ldots)$ then $$\alpha_k=\left\{\begin{array}{rcl}  (-1)^{\nu_{m_0}p_- } - (-1)^{\nu_{m_0-1}p_-}& \text{ if } & k=0,\\ (-1)^{1+\nu_{m_0}p_-} p_-& \text{ if } & k=m_0-1,\\ 0 & \text{ if } & k \geq  m_0,\end{array}  \right.$$

\item\label{pm2} if $p_-$ is even, then
\[\BIF_{\SO(2)}(\lambda_{m_0})=\nabla_{\SO(2)}\text{-}
\deg(-\Id,B(V^{\gamma}_{-\Delta_{S^n}}(\lambda_{m_0})))^{p_-}-\bI\in U_-(\SO(2)),
\]
\item \label{pm3} if $\dim V^{\gamma}_{-\Delta_{S^n}}(\lambda_{m_0})$ and $p_-\cdot \dim V^-_{m_0}$ are even, then
\[\BIF_{\SO(2)}(\lambda_{m_0})=\nabla_{\SO(2)}\text{-}\deg(-\Id,B(V^{\gamma}_{-\Delta_{S^n}}(\lambda_{m_0})))^{p_-}-\bI\in U_-(\SO(2)),
\]

\item \label{pm4} if $\dim V^{\gamma}_{-\Delta_{S^n}}(\lambda_{m_0})$ is even and $p_-\cdot \dim V^-_{m_0}$ is odd, then
\[\BIF_{\SO(2)}(\lambda_{m_0})=\bI-\nabla_{\SO(2)}\text{-}\deg(-\Id,B(V^{\gamma}_{-\Delta_{S^n}}(\lambda_{m_0})))^{p_-}\in U_+(\SO(2)).
\]
\end{enumerate}
\end{Lemma}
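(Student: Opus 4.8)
The plan is to compute $\BIF_{\SO(n)}(\lambda_{m_0})$ directly from the definition \eqref{bifind} and then push the result down to $U(\SO(2))$ via $i^{\star}$, extracting the stated coordinatewise information. First I would recall that, by Lemma \ref{operatorLBzD}, $\nabla^2_u\Phi(0,\lambda)=L-\lambda K$ with $L$ Fredholm and $K$ compact, so for $\lambda_0=\lambda_{m_0}\in\cP^{\gamma}(\Phi)$ (here $\lambda_0>0$, so by Lemma \ref{313D}(6) we must have $p_->0$) and $\epsilon>0$ small, the operators $\nabla^2_u\Phi(0,\lambda_{m_0}\pm\epsilon)$ are isomorphisms which differ only by the spectral contribution of the eigenvalue $\lambda_{m_0}$; as $\lambda$ crosses $\lambda_{m_0}$ from below to above, the negative eigenspace of $\nabla^2_u\Phi(0,\lambda)$ changes exactly by $\bigoplus_{i=1}^{p_-}V^{\gamma}_{-\Delta_{S^n}}(\lambda_{m_0})$. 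By the product property and the normalization of the degree for $\SO(n)$-invariant strongly indefinite functionals (see \cite{degree}), $\nabla_{\SO(n)}\text{-}\deg(\nabla\Phi(\cdot,\lambda_{m_0}+\epsilon),B(\bH))$ equals $\nabla_{\SO(n)}\text{-}\deg(\nabla\Phi(\cdot,\lambda_{m_0}-\epsilon),B(\bH))$ multiplied in the Euler ring by $\bigl(\nabla_{\SO(n)}\text{-}\deg(-\Id,B(V^{\gamma}_{-\Delta_{S^n}}(\lambda_{m_0})))\bigr)^{p_-}$, while the degree at $\lambda_{m_0}-\epsilon$ factors (again up to a universal unit) as $\nabla_{\SO(n)}\text{-}\deg(-\Id,B(V^-_{m_0}))^{p_-}$ times the contribution of the positive spectrum, which is $\bI$. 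Subtracting gives exactly \eqref{indexpm}.

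Next I would turn the abstract formula \eqref{indexpm} into the explicit coordinatewise statements by applying $i^{\star}$ and invoking Corollary \ref{vilen}, Corollary \ref{Hnm} and Remark \ref{deghnm}. By Remark \ref{stru}, $V^{\gamma}_{-\Delta_{S^n}}(\lambda_{m_0})\approx_{\SO(n)}\cH^n_{m_1}\oplus\cdots\oplus\cH^n_{m_q}$ with $0\le m_1<\cdots<m_q=m_0-1$ in the hemisphere normalization used throughout the application (more precisely, Corollary \ref{55} gives $m_q=m_0-1$ and $\cH^n_{m_0-1}\subset V^{\pi/2}_{-\Delta_{S^n}}(\lambda_{m_0})$); by Remark \ref{deghnm}, $i^{\star}\nabla_{\SO(n)}\text{-}\deg(-\Id,B(V^{\gamma}_{-\Delta_{S^n}}(\lambda_{m_0})))$ has its top nonzero coordinate in slot $m_0-1$, equal to $(-1)^{\mu_{m_0}+1}$ where $\mu_{m_0}=\dim V^{\gamma}_{-\Delta_{S^n}}(\lambda_{m_0})$, and all higher coordinates vanish. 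Raising to the $p_-$ power in $U(\SO(2))$ (using \eqref{mult}, where the product only ``spreads'' the $0$-th coordinate) keeps the top nonzero slot at $m_0-1$ with value $(-1)^{p_-(\mu_{m_0}+1)}p_-$ in that slot times a sign coming from the $0$-th coordinate; the $0$-th coordinate of the $p_-$-th power of $\deg(-\Id,B(V^-_{m_0}))$ is $(-1)^{p_-\dim V^-_{m_0}}=(-1)^{p_-(\nu_{m_0-1})}$. Multiplying the two factors, subtracting $\bI$ as in \eqref{indexpm}, and bookkeeping the parities $\nu_{m_0}=\nu_{m_0-1}+\mu_{m_0}$ yields precisely the formula in \eqref{pm1}: the $0$-th coordinate is $(-1)^{\nu_{m_0}p_-}-(-1)^{\nu_{m_0-1}p_-}$, the $(m_0-1)$-st is $(-1)^{1+\nu_{m_0}p_-}p_-$, and everything in slot $\ge m_0$ is $0$.

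Claims \eqref{pm2}, \eqref{pm3}, \eqref{pm4} are then immediate specializations. If $p_-$ is even, then $\nabla_{\SO(n)}\text{-}\deg(-\Id,B(V^-_{m_0}))^{p_-}$ has $0$-th coordinate $+1$ and — since an even power of any element of $U(\SO(2))$ with top coordinate of the form $(-1)^{\dim+1}$ lies in $U_+(\SO(2))$ with $0$-th coordinate $1$ — one checks it acts as the identity on the relevant coordinates, so $\BIF_{\SO(2)}(\lambda_{m_0})=\nabla_{\SO(2)}\text{-}\deg(-\Id,B(V^{\gamma}_{-\Delta_{S^n}}(\lambda_{m_0})))^{p_-}-\bI$; an even power of an element whose nonzero coordinates alternate appropriately lands in $U_+(\SO(2))$, and subtracting $\bI$ (which only lowers the $0$-th coordinate by $1$) lands the result in $U_-(\SO(2))$ — this gives \eqref{pm2}, and \eqref{pm3} is the same argument with the hypothesis ``$\dim V^{\gamma}_{-\Delta_{S^n}}(\lambda_{m_0})$ and $p_-\dim V^-_{m_0}$ even'' used in place of ``$p_-$ even'' to force the $V^-_{m_0}$ factor to be trivial and the $V^{\gamma}$-power to lie in $U_+(\SO(2))$. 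For \eqref{pm4}, when $\dim V^{\gamma}_{-\Delta_{S^n}}(\lambda_{m_0})$ is even but $p_-\dim V^-_{m_0}$ is odd, the $V^-_{m_0}$ factor contributes $0$-th coordinate $-1$, so the whole product is $-\nabla_{\SO(2)}\text{-}\deg(-\Id,B(V^{\gamma}_{-\Delta_{S^n}}(\lambda_{m_0})))^{p_-}$ on the nonzero coordinates, whence $\BIF_{\SO(2)}(\lambda_{m_0})=\bI-\nabla_{\SO(2)}\text{-}\deg(-\Id,B(V^{\gamma}_{-\Delta_{S^n}}(\lambda_{m_0})))^{p_-}\in U_+(\SO(2))$.

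The main obstacle I anticipate is not the topological input but the careful multiplicative bookkeeping in $U(\SO(2))$: because the product \eqref{mult} is not coordinatewise — the $0$-th coordinate multiplies into every slot — one must track exactly how the sign $(-1)^{k_0}$ from the $\bR[k_0,0]$-part of each $\cH^n_m$ propagates through the $p_-$-fold power and through the product of the $B(V^-_{m_0})$- and $B(V^{\gamma})$-factors. Getting the parity of $k_0$ versus the full dimension right (via Remark \ref{deghnm}, which pins the top coordinate to $(-1)^{\dim+1}$ regardless of the internal $k_i$'s) and then verifying that, under each parity hypothesis, the lower coordinates $\alpha_1,\dots,\alpha_{m_0-2}$ all carry the sign consistent with membership in $U_-(\SO(2))$ or $U_+(\SO(2))$ is the delicate step; the rest is a direct application of the degree product formula and the representation-theoretic facts already established in the Appendix.
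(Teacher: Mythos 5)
Your proposal follows essentially the same route as the paper's proof: derive \eqref{indexpm} from \eqref{bifind} via the Cartesian product property and the splitting into $V^-_{m_0}$, $V^0_{m_0}=V^{\gamma}_{-\Delta_{S^n}}(\lambda_{m_0})$ and $V^+_{m_0}$, then apply $i^{\star}$ and read off coordinates from \eqref{stopienId} and Remark \ref{deghnm}, using the fact that in \eqref{mult} the coordinates of index $\geq 1$ form a square-zero ideal, so that $a^{p_-}=(a_0^{p_-},p_-a_0^{p_--1}a_1,\ldots)$. The final formulas you state agree with the paper's, and you are right (and more explicit than the paper) that pinning the top nonzero coordinate at slot $m_0-1$ uses Corollary \ref{55}, i.e. the hemisphere structure of the eigenspaces. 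Two of your intermediate sign claims are, however, wrong as written, and they sit exactly at the delicate step you flag. First, the $(m_0-1)$-st coordinate of $\bigl(\nabla_{\SO(2)}\text{-}\deg(-\Id,B(V^{\gamma}_{-\Delta_{S^n}}(\lambda_{m_0})))\bigr)^{p_-}$ is $p_-a_0^{p_--1}a_{m_0-1}=(-1)^{1+\mu_{m_0}p_-}p_-$, not $(-1)^{p_-(\mu_{m_0}+1)}p_-$; these differ by the factor $(-1)^{p_-+1}$, so your expression is off precisely when $p_-$ is even, which is the case relevant to parts \eqref{pm2}--\eqref{pm4}. Second, in part \eqref{pm2} the even power $\gamma:=\nabla_{\SO(2)}\text{-}\deg(-\Id,B(V^{\gamma}_{-\Delta_{S^n}}(\lambda_{m_0})))^{p_-}$ does not lie in $U_+(\SO(2))$: by \eqref{stopienId} its $0$-th coordinate is $+1$ while every coordinate of index $k\geq 1$ equals $-p_-$ times a nonnegative multiplicity, hence is $\leq 0$. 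It is this nonpositivity, not membership in $U_+(\SO(2))$, that yields $\gamma-\bI\in U_-(\SO(2))$; as written, the claim that an element of $U_+(\SO(2))$ minus $\bI$ lands in $U_-(\SO(2))$ is internally inconsistent. The correct mechanism, which the paper spells out, is that the $V^-_{m_0}$-factor has $0$-th coordinate $\pm 1$ while $\gamma-\bI$ has $0$-th coordinate $0$, so by \eqref{mult} their product equals $\pm(\gamma-\bI)$, giving $U_-(\SO(2))$ in cases \eqref{pm2}, \eqref{pm3} and $U_+(\SO(2))$ in case \eqref{pm4}. With these two signs repaired your argument coincides with the paper's.
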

\begin{proof}
Since $\epsilon >0$ is sufficiently small, $$\nabla^2 \Phi(0,\lambda_{m_0} \pm \epsilon)=(- \alpha_1u_1-(\lambda_{m_0} \pm \epsilon) Tu_1,\ldots,- \alpha_p u_p-(\lambda_{m_0} \pm \epsilon) Tu_p)$$ is an isomorphism (a product of isomorphisms) and that is why by the Cartesian product formula of the degree we obtain
$$\BIF_{\SO(n)}(\lambda_{m_0})=$$
$$\begin{array}{lcl} & = &
\nabla_{\SO(n)}\text{-}\deg(\nabla^2_u\Phi(0,\lambda_{m_0}+\epsilon),B(\bH)) -\nabla_{\SO(n)}\text{-}\deg(\nabla^2_u\Phi(0,\lambda_{m_0}-\epsilon), B(\bH)) \\
& = & \displaystyle  \prod_{i=1}^p \nabla_{\SO(n)}\text{-}\deg (-\alpha_i \text{Id} -(\lambda_{m_0} + \epsilon)T,B(H^1_0(B(\gamma))))\\
 & - & \displaystyle \prod_{i=1}^p \nabla_{\SO(n)}\text{-}\deg (-\alpha_i \text{Id}-(\lambda_{m_0} - \epsilon)T,B(H^1_0(B(\gamma)))).
\end{array}
$$
Since $\nabla_{\SO(n)}\text{-}\deg(-\text{Id},B(\bH^1_0(B(\gamma))))=\bI \in U(\SO(n)),$ see \cite{degree}, we obtain
$$\begin{array}{rcl} \BIF_{\SO(n)}(\lambda_{m_0})
& = & \displaystyle  \prod_{\alpha_i=-1} \nabla_{\SO(n)}\text{-}\deg ( \text{Id} -(\lambda_{m_0} + \epsilon)T,B(H^1_0(B(\gamma)))) \\
 & - & \displaystyle \prod_{\alpha_i=-1}^p \nabla_{\SO(n)}\text{-}\deg (\text{Id}-(\lambda_{m_0} - \epsilon)T,B(H^1_0(B(\gamma)))) \\
 & = &  \left(\nabla_{\SO(n)}\text{-}\deg ( \text{Id} -(\lambda_{m_0} + \epsilon)T,B(H^1_0(B(\gamma))))\right)^{p_-}  \\
   &  - &  \left(\nabla_{\SO(n)}\text{-}\deg ( \text{Id} -(\lambda_{m_0} - \epsilon)T,B(H^1_0(B(\gamma))))\right)^{p_-}.
\end{array}
$$
Note that
$$\nabla_{\SO(n)}\text{-}\deg ( \text{Id} -(\lambda_{m_0} + \epsilon)T,B(H^1_0(B(\gamma))))=
$$ $$ =\nabla_{\SO(n)}\text{-}\deg ( -\text{Id} ,B(V^-_m))  \ast \nabla_{\SO(n)}\text{-}\deg ( -\text{Id} ,B(V^0_m))  \ast \nabla_{\SO(n)}\text{-}\deg ( \text{Id} ,B(V^+_m))$$ and
$$\nabla_{\SO(n)}\text{-}\deg ( \text{Id} -(\lambda_{m_0} - \epsilon)T,B(H^1_0(B(\gamma))))$$ $$ =\nabla_{\SO(n)}\text{-}\deg ( -\text{Id} ,B(V^-_m))  \ast \nabla_{\SO(n)}\text{-}\deg ( \text{Id} ,B(V^0_m))  \ast \nabla_{\SO(n)}\text{-}\deg (\text{Id} ,B(V^+_m)).$$
Since for any representation $W$ of  $\SO(n),$ $\nabla_{\SO(n)}\text{-}\deg ( \text{Id} ,B(W)) = \bI \in U(\SO(n) $, we obtain
$$\BIF_{\SO(n)}(\lambda_0) $$ $$ =\nabla_{\SO(n)}\text{-}\deg ( -\text{Id} ,B(V^-_m))^{p_-}  \ast \nabla_{\SO(n)}\text{-}\deg ( -\text{Id} ,B(V^0_m))^{p_-} - \nabla_{\SO(n)}\text{-}\deg ( -\text{Id} ,B(V^-_m))^{p_-}  $$
$$= \nabla_{\SO(n)}\text{-}\deg ( -\text{Id} ,B(V^-_m))^{p_-} \ast \left(  \nabla_{\SO(n)}\text{-}\deg ( -\text{Id} ,B(V^0_m))^{p_-} - \bI\right),$$ which completes the proof of the first part of this lemma.

\noindent \eqref{pm1} Taking into account formula \eqref{indexpm} we obtain $$
\BIF_{\SO(2)}(\lambda_{m_0})= i^{\star}(\BIF_{\SO(n)}(\lambda_{m_0}))$$
\begin{equation} \label{so2n}  =\nabla_{\SO(2)}\text{-}\deg(-\Id,B(V_{m_0}^-))^{p_-} \ast
\left((\nabla_{\SO(2)}\text{-}\deg(-\Id,B(V^{\gamma}_{-\Delta_{S^n}} (\lambda_{m_0}))))^{p_-} -\bI\right) \in U(\SO(2)).
\end{equation}
By the Cartesian product formula for the degree for $\SO(2)$-equivariant gradient maps we have $\nabla_{\SO(2)}\text{-}\deg(-\Id,B(V_{m_0}^-))^{p_-}= \nabla_{\SO(2)}\text{-}\deg(-\Id,B(V_{m_0}^- \times \ldots \times V_{m_0}^-))=\beta=(\beta_0,\beta_1,\ldots,\beta_k,\ldots).$ Combining Corollaries \ref{55}, \ref{Hnm} with Remark \ref{deghnm} we obtain
$$\beta_k=\left\{\begin{array}{rcl}  (-1)^{\nu_{m_0-1}p_-} & \text{ if } & k=0,\\ 0 & \text{ if } & k \geq  m_0-1\end{array} \right.$$
and
\\ $\nabla_{\SO(2)}\text{-}\deg(-\Id,B(V^{\gamma}_{-\Delta_{S^n}} (\lambda_{m_0}))))^{p_-} = \nabla_{\SO(2)}\text{-}\deg(-\Id,B(V^{\gamma}_{-\Delta_{S^n}} (\lambda_{m_0}) \times \ldots \times V^{\gamma}_{-\Delta_{S^n}} (\lambda_{m_0}))) = $
$=\gamma=(\gamma_0,\gamma_1,\ldots,\gamma_k,\ldots).$ Once more, applying Corollaries \ref{55}, \ref{Hnm} with Remark \ref{deghnm}, we obtain
$$\gamma_k=\left\{\begin{array}{rcl}  (-1)^{\mu_{m_0}p_-} & \text{ if } & k=0,\\
(-1)^{1+\mu_{m_0}p_-} p_-& \text{ if } & k=m_0-1, \\
 0 & \text{ if } & k \geq  m_0.\end{array} \right.$$
Now formula \eqref{so2n} has the following form
$$\BIF_{\SO(2)}(\lambda_{m_0})=\beta \ast (\gamma - \bI) = \alpha = (\alpha_0,\alpha_1, \ldots, \alpha_k,\ldots).$$
By  \eqref{mult} we obtain
$$\alpha_k=\left\{\begin{array}{rcl}  (-1)^{1+\nu_{m_0}p_-} p_-& \text{ if } & k=m_0-1,\\ 0 & \text{ if } & k \geq  m_0,\end{array} \right.$$ which completes the proof.

\noindent \eqref{pm2}  Denote
$$\begin{array}{rcl}  \nabla_{\SO(n)}\text{-}\deg(-\Id,B(V_{m_0}^-))^{p_-} & = & (\alpha_0, \alpha_1,\ldots, \alpha_k,\ldots), \\ \nabla_{\SO(n)}\text{-}\deg(-\Id,B(V^{\gamma}_{-\Delta_{S^n}}(\lambda_{m_0}))))^{p_-} -\bI & = & (\beta_0, \beta_1,\ldots, \beta_k,\ldots).\end{array}  $$
Since $p_- > 0$ is even, combining formulas \eqref{mult}, \eqref{stopienId} we obtain that $\alpha_0=1,\beta_0=0$ and $\beta_k \leq 0,$ for $ k \geq 1.$ Finally by  \eqref{mult} we obtain
$$(1,\alpha_1,\ldots, \alpha_k, \ldots) \ast (0,\beta_1,\ldots,\beta_k,\ldots)= (0,\beta_1,\ldots,\beta_k,\ldots) \in U_-(\SO(2)),$$ which completes the proof.

\noindent \eqref{pm3}  Denote
$$\begin{array}{rcl}  \nabla_{\SO(n)}\text{-}\deg(-\Id,B(V_{m_0}^-))^{p_-} & = & (\alpha_0, \alpha_1,\ldots, \alpha_k,\ldots), \\ \nabla_{\SO(n)}\text{-}\deg(-\Id,B(V^{\gamma}_{-\Delta_{S^n}}(\lambda_{m_0}))))^{p_-} -\bI & = & (\beta_0, \beta_1,\ldots, \beta_k,\ldots).\end{array}  $$
Since $p_- \cdot \dim V^-_{m_0}$ and $\dim V^{\gamma}_{-\Delta_{S^n}}(\lambda_{m_0})$ are  even, combining formulas \eqref{mult}, \eqref{stopienId} we obtain that $\alpha_0=1$, $\beta_0=0$ and $\beta_k \leq 0,$ for $ k \geq 1.$ Finally by formula \eqref{mult} we obtain
$$(1,\alpha_1,\ldots, \alpha_k, \ldots) \ast (0,\beta_1,\ldots,\beta_k,\ldots)= (0,\beta_1,\ldots,\beta_k,\ldots) \in U_-(\SO(2)),$$ which completes the proof.

\noindent \eqref{pm4} Denote
$$\begin{array}{rcl}  \nabla_{\SO(n)}\text{-}\deg(-\Id,B(V_{m_0}^-))^{p_-} & = & (\alpha_0, \alpha_1,\ldots, \alpha_k,\ldots), \\ \nabla_{\SO(n)}\text{-}\deg(-\Id,B(V^{\gamma}_{-\Delta_{S^n}}(\lambda_{m_0}))))^{p_-} -\bI & = & (\beta_0, \beta_1,\ldots, \beta_k,\ldots).\end{array}  $$
Since $p_- \cdot \dim V^-_{m_0}$ is odd and $\dim V^{\gamma}_{-\Delta_{S^n}}(\lambda_{m_0})$ is  even, combining formulas \eqref{mult}, \eqref{stopienId} we obtain that $\alpha_0=-1$, $\beta_0=0$ and $\beta_k \leq 0,$ for $ k \geq 1.$ Finally, by formula \eqref{mult} we obtain
$$(-1,\alpha_1,\ldots, \alpha_k, \ldots) \ast (0,\beta_1,\ldots, \beta_k, \ldots)= (0,-\beta_1,\ldots,-\beta_k,\ldots) \in U_+(\SO(2)),$$ which completes the proof.
\end{proof}

In the following lemma we consider the case $p_+>0$.

\begin{Lemma}\label{pp}
Assume that $p_+>0$ and fix  $\lambda_{m_0}\in\sigma(-\Delta_{S^{n}};B(\gamma))$. Then
$$\BIF_{\SO(n)}(-\lambda_{m_0})= $$
$$=\left(\nabla_{\SO(n)}\text{-}\deg(-\Id,B(V^-_{m_0}))\right)^{-p_+}
 \ast
\left((\nabla_{\SO(n)}\text{-} \deg(-\Id,B(V^{\gamma}_{-\Delta_{S^n}}(\lambda_{m_0}))))^{p_+} -\bI\right).
$$
Moreover,
\begin{enumerate}
\item  \label{pp1} if $\BIF_{\SO(2)}(-\lambda_{m_0})=
i^{\star}(\BIF_{\SO(n)}(-\lambda_{m_0}))=(\alpha_0,\alpha_1, \ldots, \alpha_k,\ldots)$ then $$\alpha_k=\left\{\begin{array}{rcl}
(-1)^{\nu_{m_0} p_+} - (-1)^{\nu_{m_0-1}p_+} & \text{ if } & k=0, \\
(-1)^{1+\nu_{m_0}p_+} p_+& \text{ if } & k=m_0-1,\\
0 & \text{ if } & k \geq  m_0\end{array},  \right.$$

\item \label{pp2}  if $p_+$ is even, then
\[\BIF_{\SO(2)}(\lambda_{m_0})=\nabla_{\SO(2)}\text{-}\deg(-\Id,B(V^{\gamma}_{-\Delta_{S^n}}(\lambda_{m_0}))))^{p_+}-\bI\in U_-(\SO(2)),
\]

\item \label{pp3} if $\dim V_{-\Delta_{S^n}}(\lambda_{m_0})$ and $p_+\cdot \dim V^-_{m_0}$ are even, then
\[\BIF_{\SO(2)}(\lambda_{m_0})=\nabla_{\SO(2)}\text{-}\deg(-\Id,B(V^{\gamma}_{-\Delta_{S^n}}(\lambda_{m_0})))^{p_+}-\bI\in U_-(\SO(2)),
\]

\item \label{pp4} if $\dim V_{-\Delta_{S^n}}(\lambda_{m_0})$ is even and $p_+\cdot \dim V^-_{m_0}$ is odd, then
\[\BIF_{\SO(2)}(\lambda_{m_0})=\bI-\nabla_{\SO(2)}\text{-}\deg(-\Id,B(V^{\gamma}_{-\Delta_{S^n}}(\lambda_{m_0})))^{p_+}\in U_+(\SO(2)).
\]
\end{enumerate}
\end{Lemma}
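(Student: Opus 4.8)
The plan is to repeat, almost verbatim, the proof of Lemma~\ref{pm}, interchanging the roles of the indices $i$ with $\alpha_i=-1$ and with $\alpha_i=+1$ (equivalently of $p_-$ and $p_+$) and replacing $\lambda_{m_0}$ by $-\lambda_{m_0}$ throughout. First, since $\epsilon>0$ is small, $-\lambda_{m_0}\pm\epsilon\notin\cP^{\gamma}(\Phi)$, so by Lemmas~\ref{operatorLBzD} and \ref{313D} the map $\nabla^2_u\Phi(0,-\lambda_{m_0}\pm\epsilon)=\left(-\alpha_1\Id-(-\lambda_{m_0}\pm\epsilon)T,\dots,-\alpha_p\Id-(-\lambda_{m_0}\pm\epsilon)T\right)$ is an isomorphism, being a direct product of one-variable isomorphisms. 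Inserting this into the definition \eqref{bifind} of $\BIF_{\SO(n)}(-\lambda_{m_0})$ and using the Cartesian product formula for the $\SO(n)$-equivariant gradient degree writes $\BIF_{\SO(n)}(-\lambda_{m_0})$ as the difference of the two $p$-fold $\ast$-products $\prod_{i=1}^{p}\nabla_{\SO(n)}\text{-}\deg\left(-\alpha_i\Id-(-\lambda_{m_0}\pm\epsilon)T,B(H^1_0(B(\gamma)))\right)$.

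The next step is to discard the factors with $\alpha_i=-1$: for such $i$ the operator equals $\Id-(-\lambda_{m_0}\pm\epsilon)T=\Id+(\lambda_{m_0}\mp\epsilon)T$, which is positive definite since $\sigma(T)\subset(0,\infty)$, hence is joined to $\Id$ by a path of isomorphisms and contributes the unit $\bI\in U(\SO(n))$. Only the $p_+$ identical factors with $\alpha_i=+1$ remain, and for those the operator is $-\Id-(-\lambda_{m_0}\pm\epsilon)T=-\left(\Id-(\lambda_{m_0}\mp\epsilon)T\right)$. Using Lemma~\ref{313D}, i.e. $\sigma(T)=\{1/\lambda_m\}$ with the $1/\lambda_m$-eigenspace of $T$ lying inside $V^{\gamma}_{-\Delta_{S^n}}(\lambda_m)$, together with the splitting $H^1_0(B(\gamma))=\cl\left(V^-_{m_0}\oplus V^0_{m_0}\oplus V^+_{m_0}\right)$, this operator acts as $-\Id$ on $V^+_{m_0}$, as $+\Id$ on $V^-_{m_0}$, and on $V^0_{m_0}=V^{\gamma}_{-\Delta_{S^n}}(\lambda_{m_0})$ as $-\Id$ on the side $-\lambda_{m_0}+\epsilon$ and as $+\Id$ on the side $-\lambda_{m_0}-\epsilon$. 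This is the one genuinely new point compared with Lemma~\ref{pm}: on the infinite-dimensional summand $V^+_{m_0}$ the surviving factors coincide with the reference map, so to evaluate $\nabla_{\SO(n)}\text{-}\deg(-\Id,B(V^+_{m_0}))$ one invokes the normalization $\nabla_{\SO(n)}\text{-}\deg(-\Id,B(H^1_0(B(\gamma))))=\bI$ together with $\nabla_{\SO(n)}\text{-}\deg(\Id,B(W))=\bI$ from \cite{degree}, which forces $\nabla_{\SO(n)}\text{-}\deg(-\Id,B(V^+_{m_0}))=\left(\nabla_{\SO(n)}\text{-}\deg(-\Id,B(V^-_{m_0}))\ast\nabla_{\SO(n)}\text{-}\deg(-\Id,B(V^{\gamma}_{-\Delta_{S^n}}(\lambda_{m_0})))\right)^{-1}$, an inverse in the Euler ring. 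Substituting, raising to the $p_+$-th power, taking the difference and factoring out $\left(\nabla_{\SO(n)}\text{-}\deg(-\Id,B(V^-_{m_0}))\right)^{-p_+}$ yields the identity for $\BIF_{\SO(n)}(-\lambda_{m_0})$ displayed in the statement; the negative exponent $-p_+$ is exactly the footprint of these inverses, which is what distinguishes the formula from the one in Lemma~\ref{pm}.

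For the four ``moreover'' assertions I would push everything down to $U(\SO(2))$ through the ring homomorphism $i^{\star}$ of Remark~\ref{iso2} and substitute the explicit $\SO(2)$-degrees of the pieces. As in the proof of Lemma~\ref{pm}\eqref{pm1}, using Corollary~\ref{55} to identify $V^-_{m_0}$ and $V^{\gamma}_{-\Delta_{S^n}}(\lambda_{m_0})$ as sums of spaces $\cH^n_l$, Corollary~\ref{Hnm} and Remark~\ref{deghnm} for their $\SO(2)$-degrees, and the product formula \eqref{mult}, one obtains that $\BIF_{\SO(2)}(-\lambda_{m_0})$ is supported on the coordinates $0,\dots,m_0-1$, with $(m_0-1)$-th coordinate $(-1)^{1+\nu_{m_0}p_+}p_+$ and $0$-th coordinate $(-1)^{\nu_{m_0}p_+}-(-1)^{\nu_{m_0-1}p_+}$; here one uses that $\left[i^{\star}\nabla_{\SO(n)}\text{-}\deg(-\Id,B(W))\right]_0=(-1)^{\dim W}$, the nontrivial $\SO(2)$-irreducibles being two-dimensional. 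This proves \eqref{pp1}. For \eqref{pp2}--\eqref{pp4} only the sign of the leading coordinate of $\left(\nabla_{\SO(2)}\text{-}\deg(-\Id,B(V^-_{m_0}))\right)^{-p_+}$ matters: when $p_+$, respectively $p_+\cdot\dim V^-_{m_0}$, is even it equals $1$, so by \eqref{mult} this element acts as the identity on $\left(\nabla_{\SO(2)}\text{-}\deg(-\Id,B(V^{\gamma}_{-\Delta_{S^n}}(\lambda_{m_0})))\right)^{p_+}-\bI$, all of whose entries are $\le 0$ by \eqref{stopienId}, whence membership in $U_-(\SO(2))$; when that product is odd the leading coordinate is $-1$, which reverses every sign and lands the index in $U_+(\SO(2))$.

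I expect the only real difficulty to be the sign and inverse bookkeeping in the Euler ring: unlike in Lemma~\ref{pm}, the surviving one-variable factors are perturbations of $-\Id$ rather than of $\Id$, so one must (i) evaluate the degree on the infinite-dimensional summand $V^+_{m_0}$ correctly, as an inverse, using the normalization of the degree, and (ii) keep track of how the parities of $p_+$ and of $p_+\cdot\dim V^-_{m_0}$ propagate through the multiplication \eqref{mult} into the signs of the coordinates in \eqref{pp1}--\eqref{pp4}. Everything else is a line-by-line transcription of the proof of Lemma~\ref{pm}.
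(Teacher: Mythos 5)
Your overall strategy is exactly the one the paper intends: the paper gives no separate proof of Lemma~\ref{pp} and merely declares it ``in spirit the same'' as that of Lemma~\ref{pm}, and you carry out that adaptation correctly in its structural parts. In particular you correctly identify which factors survive (those with $\alpha_i=+1$), the signs of $-\Id-(-\lambda_{m_0}\pm\epsilon)T$ on $V^-_{m_0}$, $V^0_{m_0}$, $V^+_{m_0}$, and the one genuinely new ingredient, namely that the infinite-dimensional summand now carries the factor $\nabla_{\SO(n)}\text{-}\deg(-\Id,B(V^+_{m_0}))$, which must be expressed as the Euler-ring inverse $\bigl(\nabla_{\SO(n)}\text{-}\deg(-\Id,B(V^-_{m_0}))\ast\nabla_{\SO(n)}\text{-}\deg(-\Id,B(V^0_{m_0}))\bigr)^{-1}$ via the normalization $\nabla_{\SO(n)}\text{-}\deg(-\Id,B(H^1_0(B(\gamma))))=\bI$.

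There is, however, a gap at the step ``substituting \dots yields the identity displayed in the statement.'' Write $A=\nabla_{\SO(n)}\text{-}\deg(-\Id,B(V^-_{m_0}))$ and $B=\nabla_{\SO(n)}\text{-}\deg(-\Id,B(V^0_{m_0}))$. Your own intermediate formulas give degree $\bI\ast B\ast(AB)^{-1}=A^{-1}$ at $-\lambda_{m_0}+\epsilon$ and $(AB)^{-1}$ at $-\lambda_{m_0}-\epsilon$ on each surviving coordinate, hence
\begin{equation*}
\BIF_{\SO(n)}(-\lambda_{m_0})=A^{-p_+}-\left(AB\right)^{-p_+}=A^{-p_+}\ast\left(\bI-B^{-p_+}\right)=\left(AB\right)^{-p_+}\ast\left(B^{p_+}-\bI\right),
\end{equation*}
whereas the statement asserts $A^{-p_+}\ast\left(B^{p_+}-\bI\right)$; the two differ by the factor $B^{-p_+}$. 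Since for $a=(a_0,a_1,\dots)$ with $a_0=\pm1$ one has $a^{-1}=(a_0,-a_1,-a_2,\dots)$, the identity $\bI-B^{-p_+}=B^{p_+}-\bI$ needed to reconcile them holds precisely when the zeroth coordinate of $B^{p_+}$, namely $(-1)^{p_+\cdot\dim V^{\gamma}_{-\Delta_{S^n}}(\lambda_{m_0})}$, equals $+1$. When $p_+\cdot\dim V^{\gamma}_{-\Delta_{S^n}}(\lambda_{m_0})$ is odd the two expressions have opposite zeroth coordinates (one can check they still agree in the $(m_0-1)$-st and all higher coordinates, so parts \eqref{pp2}--\eqref{pp4} and the $k\geq m_0-1$ claims of \eqref{pp1} are unaffected), so your derivation yields $\alpha_0=(-1)^{\nu_{m_0-1}p_+}-(-1)^{\nu_{m_0}p_+}$ rather than the stated $(-1)^{\nu_{m_0}p_+}-(-1)^{\nu_{m_0-1}p_+}$. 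This is not a cosmetic point: the zeroth coordinate of $\BIF_{\SO(2)}(-\lambda_1)$ is exactly what the proof of Theorem~\ref{pmpn1} consumes. You must either exhibit the additional cancellation (coming from the precise normalization of the strongly indefinite degree in \cite{degree}) that removes the factor $B^{-p_+}$, or conclude that the zeroth coordinate in \eqref{pp1} carries the opposite sign; as written, the passage from your correct intermediate formulas to the displayed identity is unjustified.
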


\end{document}